\newtheorem{theorem}{Theorem}[section]
\newtheorem{corollary}[theorem]{Corollary}
\newtheorem{proposition}[theorem]{Proposition}
\newtheorem{lemma}[theorem]{Lemma}
\theoremstyle{definition}
\newtheorem{definition}[theorem]{Definition}
\newtheorem{remark}[theorem]{Remark}
\newtheorem{example}[theorem]{Example}
\newcommand{\F}{\mathbb{F}}
\newcommand{\Aut}{\mathrm{Aut}}
\newcommand{\But}{\textrm{BH}}
\newcommand\blfootnote[1]{%
  \begingroup
  \renewcommand\thefootnote{}\footnote{#1}%
  \addtocounter{footnote}{-1}%
  \endgroup
}
\newcommand\nth{\textsuperscript{th}\xspace}
\title{\textbf{\Large{Spectra of Hadamard matrices}}}
\author{
\textsc{Ronan Egan}
				\thanks{\textit{E-mail: ronan.egan@nuigalway.ie}}\\
\textit{\footnotesize{School of Mathematics, Statistics and Applied Mathematics}}\\
\textit{\footnotesize{National University of Ireland, Galway}}\\
\textit{\footnotesize{Galway, Ireland}}\\
\textsc{Padraig \'O Cath\'ain}
				\thanks{\textit{E-mail: pocathain@wpi.edu}}\\
\textit{\footnotesize{Department of Mathematical Sciences}}\\
\textit{\footnotesize{Worcester Polytechnic Institute}}\\
\textit{\footnotesize{Worcester, MA, USA}}\\
\textsc{Eric Swartz}
				\thanks{\textit{E-mail: easwartz@wm.edu}}\\
\textit{\footnotesize{Department of Mathematics}}\\
\textit{\footnotesize{College of William \& Mary}}\\
\textit{\footnotesize{Williamsburg, VA, USA}}}
\begin{document}
\thispagestyle{empty}
\maketitle

\begin{abstract}
In this paper, we develop a technique for controlling the spectra of Hadamard matrices with sufficiently rich automorphism groups. For each integer $t \geq 2$, we construct a Hadamard matrix $H_{t}$ equivalent to the Sylvester matrix of order $n_{t} = 2^{2^{t-1}-1}$ such that the minimal polynomial of $\frac{1}{\sqrt{n_{t}}}H_{t}$ is the cyclotomic polynomial $\Phi_{2^{t+1}}(x)$. As an application we construct real Hadamard matrices from Butson Hadamard matrices.

More concretely, a Butson Hadamard matrix $H$ has entries in the $k^{\nth}$ roots of unity and satisfies the matrix equation $HH^{\ast} = nI_{n}$. We write $\mathrm{BH}(n, k)$ for the set of such matrices. A \textit{complete morphism} of Butson matrices is a map $\mathrm{BH}(n, k) \rightarrow \mathrm{BH}(m, \ell)$.  The matrices $H_{t}$ yield new examples of complete morphisms
\[ \mathrm{BH}(n, 2^{t}) \rightarrow \mathrm{BH}(2^{2^{t-1}-1}n, 2)\,, \]
for each $t \geq 2$, generalising a well-known result of Turyn.

\blfootnote{2010 Mathematics Subject Classification: 05B20, 05B30, 05C50}\blfootnote{Keywords: Butson Hadamard matrix, morphism, eigenvalues}

\end{abstract}

\section{Introduction}

Let $M$ be an $n \times n$ matrix with entries in the complex numbers $\mathbb{C}$. If every entry $m_{ij}$ of $M$ has modulus bounded by $1$,
then Hadamard's theorem states that $|\det(M)| \leq n^{n/2}$. Hadamard himself observed that a matrix $M$ meets this bound with equality if and
only if every entry in $M$ has modulus $1$, and every pair of distinct rows of $M$ are orthogonal (with respect to the usual Hermitian inner product) \cite{Hadamard1893}.  Such a matrix is said to be \emph{Hadamard}, though the term has become synonymous with the special case where entries are in $\{\pm 1\}$.  If $M$ is Hadamard with entries in the $k\nth$ roots of unity for some $k$, then $M$ is \emph{Butson Hadamard} $\But(n,k)$ or just \emph{Butson}, named for their appearance in a paper of Butson \cite{Butson}.

In \cite{mypaper-morphisms}, the authors define a \textit{(complete) morphism} of Butson matrices to be a function from $\But(n,k)$ to $\But(r,\ell)$.  A \textit{partial morphism} is a morphism such that the domain is a proper subset of $\But(n,k)$.  We will restrict our attention to morphisms which come from embeddings of matrix algebras (as the tensor product does); such morphisms can be considered generalized plug-in constructions.
We recall some key definitions.

For each positive integer $k$, we define $\zeta_{k} = e^{2\pi i/k}$ and set $G_{k} = \langle \zeta_{k} \rangle$.  We define $H^{\phi}$ to be the entrywise application of $\phi$ to $H$ whenever $\phi$ is a function defined on the entries of $H$, and for each integer $r$ we write $H^{(r)}$ for the function which replaces each entry of $H$ by its $r\nth$ power.  This should be distinguished from the notation $H^r$ meaning the $r\nth$ power of $H$ under the usual matrix product.

\begin{definition} \label{defn:sound}
Let $X,Y \subseteq G_{k}$ be fixed.
Suppose that $H \in \But(n,k)$ is such that every entry of $H$ is contained in $X$,
and that $M \in \But(m,\ell)$ is such that every eigenvalue of $\sqrt{m}^{-1}M$ is contained in $Y$.

Then the pair $(H, M)$ is $(X,Y)$-\textit{sound} if
\begin{enumerate}
\item For each $\zeta_{k}^{j} \in X$, we have $\sqrt{m}^{1-j} M^{j} \in \But(m, \ell)$.
\item For each $\zeta_{k}^{j} \in Y$, we have $H^{(j)} \in \But(n,k)$.
\end{enumerate}
We will often say that $(H,M)$ is a \textit{sound pair} if there exist sets $X$ and $Y$ for which $(H, M)$ is $(X, Y)$-sound.
\end{definition}

\begin{theorem}[Theorem 4 \cite{mypaper-morphisms}]
\label{thm:main}
Let $H \in \But(n,k)$ and $M \in \But(m,\ell)$ be Hadamard matrices.
Define a map $\phi: \zeta_{k}^{j} \mapsto \sqrt{m}^{1-j} M^{j}$, and write
$H^{\phi}$ for the entrywise application of $\phi$ to $H$.
If $(H, M)$ is a sound pair then $H^{\phi} \in \But(mn,\ell)$.
\end{theorem}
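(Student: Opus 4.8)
The plan is to verify directly that $H^\phi$ satisfies the two conditions defining $\But(mn,\ell)$: that its entries are $\ell$\nth roots of unity, and that $H^\phi (H^\phi)^\ast = mn\, I_{mn}$. The first condition follows quickly from the first clause of soundness: since $(H,M)$ is $(X,Y)$-sound, every entry of $H$ lies in $X$, so each $m \times m$ block of $H^\phi$ has the form $\phi(\zeta_k^j) = \sqrt{m}^{1-j} M^{j}$ for some $\zeta_k^j \in X$, and clause (1) asserts that this block lies in $\But(m,\ell)$; in particular all of its entries, hence all entries of $H^\phi$, lie in $G_\ell$.

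For the orthogonality relation I would first record a cleaner form of $\phi$. Since $MM^\ast = m I_m$, the matrix $U := \tfrac{1}{\sqrt{m}} M$ is unitary, and a one-line computation gives $\phi(\zeta_k^j) = \sqrt{m}\, U^j$ for every $j$. Writing the $(a,b)$ entry of $H$ as $\zeta_k^{j_{ab}}$ and multiplying $H^\phi$ by its conjugate transpose blockwise (using $U^\ast = U^{-1}$), the $(a,c)$ block of $H^\phi (H^\phi)^\ast$ is $\sum_{b=1}^{n} \phi(\zeta_k^{j_{ab}})\,\phi(\zeta_k^{j_{cb}})^\ast = m \sum_{b=1}^{n} U^{\,j_{ab} - j_{cb}}$. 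For $a = c$ every exponent vanishes and the block equals $m n\, I_m$, exactly the diagonal blocks of $mn\, I_{mn}$.

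It then remains to show that the off-diagonal blocks vanish, which is where the second clause of soundness is used. As $U$ is normal, one can write $U = P D P^\ast$ with $P$ unitary and $D = \Diag(\lambda_1,\dots,\lambda_m)$, where each eigenvalue $\lambda_i$ of $U = \sqrt{m}^{-1}M$ lies in $Y \subseteq G_k$ by hypothesis; say $\lambda_i = \zeta_k^{s_i}$. Then $\sum_b U^{\,j_{ab}-j_{cb}} = P\big(\sum_b D^{\,j_{ab}-j_{cb}}\big) P^\ast$, and the $i$\nth diagonal entry of the inner factor is $\sum_b \zeta_k^{s_i(j_{ab}-j_{cb})} = \sum_b (H_{ab})^{s_i}\,\overline{(H_{cb})^{s_i}}$, i.e.\ the Hermitian inner product of rows $a$ and $c$ of $H^{(s_i)}$. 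Clause (2) of Definition~\ref{defn:sound}, applied to $\lambda_i = \zeta_k^{s_i} \in Y$, gives $H^{(s_i)} \in \But(n,k)$, so for $a \neq c$ these rows are orthogonal, every diagonal entry of $\sum_b D^{\,j_{ab}-j_{cb}}$ is zero, and hence the $(a,c)$ block of $H^\phi(H^\phi)^\ast$ is zero. Combined with the diagonal case this gives $H^\phi(H^\phi)^\ast = mn\, I_{mn}$, so $H^\phi \in \But(mn,\ell)$.

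The one genuinely substantive point is the mismatch between the two notions of ``power'' in play: the off-diagonal blocks of $H^\phi(H^\phi)^\ast$ are built from \emph{matrix} powers $U^j$ of the plug-in matrix $M$, whereas the orthogonality data we are handed concerns the \emph{entrywise} powers $H^{(j)}$ of $H$. Simultaneously diagonalizing all the $U^j$ and exploiting that the eigenvalues of $U$ are themselves $k$\nth roots of unity is precisely what converts a single statement about $\sum_b U^{\,j_{ab}-j_{cb}}$ into $m$ separate orthogonality statements about rows of the matrices $H^{(s_i)}$. The two clauses of Definition~\ref{defn:sound} are tailored exactly so that the entry condition and this orthogonality argument both go through, and once the identity $\phi(\zeta_k^j) = \sqrt{m}\, U^j$ is noted the rest is routine.
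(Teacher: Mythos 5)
Your proof is correct. Note that the paper under review does not prove this statement at all --- it is imported as Theorem 4 of \cite{mypaper-morphisms} --- and your argument (writing $\phi(\zeta_k^j)=\sqrt{m}\,U^j$ with $U=\sqrt{m}^{-1}M$ unitary, computing the blocks of $H^\phi(H^\phi)^\ast$ as $m\sum_b U^{j_{ab}-j_{cb}}$, and diagonalizing $U$ so that clause (2) of soundness turns the off-diagonal blocks into row inner products of the entrywise powers $H^{(s_i)}$, which vanish) is essentially the standard argument given in that reference, with the eigenvalues of $U$ being $k$\nth roots of unity correctly handling the dependence of exponents only on residues mod $k$.
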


\begin{example}
Let $H$ be any $\But(n,4)$ and let
\[ M = \left[ \begin{array}{rr} 1 & 1 \\ -1 & 1 \end{array}\right].\]
Then $(\zeta_{8}H,M)$ is $(X,Y)$-sound where $X = \{\zeta_{8},\zeta_{8}^{3},\zeta_{8}^{5},\zeta_{8}^{7}\}$ and $Y = \{\zeta_{8},\zeta_{8}^{7}\}$.  By Theorem \ref{thm:main}, from any $H \in \But(n,4)$ we can construct a real Hadamard matrix of order $2n$.  This example is due to Turyn \cite{Turyn}, though see also \cite{Cohn65}.
\end{example}

Adhering to the notation of Definition \ref{defn:sound}, if the set $Y$ of eigenvalues of $\sqrt{m}^{-1}M$ contains only primitive $k\nth$ roots of unity, the second condition is vacuous. If in addition, $p^{\alpha+1}$ divides $\varphi(k)$, where $\varphi$ denotes the Euler phi function and $p$ is a prime, the primitive $k\nth$ roots contain a translate of the roots of unity of order $p^{\alpha}$, from which we can construct a complete morphism. The problem of constructing complete morphisms is the motivation of this paper. We expect that such complete morphisms could be used both to give new proofs of known constructions of Hadamard matrices and to construct Hadamard matrices at previously unknown orders.

We focus on real Hadamard matrices hereafter.  Let $\chi(M)$ and $\mathfrak{m}(M)$ denote the characteristic and minimal polynomials of a matrix $M$.  We recall that two (real) Hadamard matrices, $H$ and $H'$ are \textit{Hadamard equivalent} if there exist monomial $\{\pm 1\}$-matrices $P$ and $Q$ such that $PHQ^{\top} = H'$. Unless $P = Q$, the Hadamard matrices need not have the same spectra, but it is well known that every eigenvalue of a Hadamard matrix $H$ of order $n$ is of absolute value $\sqrt{n}$.  In the case when $H' = H$, the collection of all such ordered pairs $(P,Q)$ forms the \textit{automorphism group} of $H$, which is denoted $\Aut(H)$.  For convenience, we will generally work with the rescaled characteristic polynomial $\chi(n^{-1/2}H)$, for which
every root must be a complex number of norm $1$. In this paper we will establish the following theorem.

\begin{theorem} \label{thm:hadexist}
For each integer $t \geq 2$ there exists a real Hadamard matrix $M$ of order $n_{t} = 2^{2^{t-1}-1}$ such that $\mathfrak{m}(n_{t}^{-1/2}M)$ is the cyclotomic polynomial $\Phi_{2^{t+1}}(x)= x^{2^{t}} + 1$.
\end{theorem}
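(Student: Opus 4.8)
The plan is to build $M$ as the Sylvester Hadamard matrix $H_{t}$ promised in the abstract, working inductively on $t$, and to track not the matrix itself but a carefully chosen \emph{conjugate} of its normalization that exhibits the cyclotomic minimal polynomial. Concretely, set $n_{t} = 2^{2^{t-1}-1}$ and $N_{t} = n_{t}^{-1/2}H_{t}$, so $N_{t}$ is a real orthogonal matrix of order $n_{t}$. The base case $t = 2$ asks for a real Hadamard matrix of order $n_{2} = 2$ whose normalization has minimal polynomial $\Phi_{8}(x) = x^{4}+1$; but wait — $\Phi_{8}$ has degree $4 > 2$, so the base case must actually start higher, or (more likely) the construction produces order $n_{t}$ with a matrix whose \emph{characteristic} polynomial is a power of $\Phi_{2^{t+1}}$ and whose minimal polynomial is exactly $\Phi_{2^{t+1}}$; one checks $\deg \Phi_{2^{t+1}} = 2^{t} \le n_{t} = 2^{2^{t-1}-1}$ precisely for $t \ge 3$, with $t=2$ handled by the direct $\But(n,4)\to\But(2n,2)$ observation in the Example (the $4\times 4$ matrix $M\otimes M$ has normalization with minimal polynomial $x^{4}+1$). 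So the first step is to pin down the base case and the exact inductive target: a real Hadamard matrix whose normalization, after conjugation by a suitable real orthogonal matrix, has rational canonical form a direct sum of copies of the companion matrix $C_{2^{t+1}}$ of $\Phi_{2^{t+1}}(x)$.

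The inductive step is the heart of the argument. Suppose $N_{t}$ has minimal polynomial $\Phi_{2^{t+1}}(x) = x^{2^{t}}+1$; I want to produce $N_{t+1}$ of order $n_{t+1} = n_{t}^{2}\cdot 2^{\,?}$ — checking, $n_{t+1}/n_{t} = 2^{2^{t}-1}/2^{2^{t-1}-1} = 2^{2^{t-1}}$, i.e. $n_{t+1} = n_{t}\cdot 2^{2^{t-1}} = n_{t}^{2}\cdot 2$ since $n_{t}^{2} = 2^{2^{t}-2}$. So the step roughly squares the order and multiplies by $2$. The natural move is to form a tensor-type product $N_{t}\otimes B$ for a small fixed orthogonal matrix $B$, or better, to realize $N_{t+1}$ as a $2\times 2$ block matrix built from $N_{t}$ and a companion-like block so that the eigenvalues get squared in the exponent: if $\lambda$ is a primitive $2^{t+1}$st root of unity (an eigenvalue of $N_{t}$), I want the eigenvalues of $N_{t+1}$ to be the primitive $2^{t+2}$nd roots of unity, which are precisely the square roots of the $\lambda$'s. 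This suggests using a block construction whose eigenvalues are the two square roots of each eigenvalue of $N_{t}$ — e.g. a matrix conjugate to $\bigl(\begin{smallmatrix} 0 & I \\ N_{t} & 0\end{smallmatrix}\bigr)$ type form, tensored appropriately to stay Hadamard. The key algebraic lemma to prove is: if $A$ is orthogonal with $A^{2^{t}} = -I$, then an explicit block matrix $D(A)$ (Hadamard, of the right order) satisfies $D(A)^{2^{t+1}} = -I$ with no smaller such relation, forcing minimal polynomial $\Phi_{2^{t+2}}$. One then verifies $D(A)$ is $\pm1$ and Hadamard when $A$ is, using the Sylvester/tensor structure — this is where equivalence to the Sylvester matrix of order $n_{t+1}$ enters and must be checked by the usual $HH^{\top} = nI$ computation on blocks.

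The main obstacle, and where I expect the real work to lie, is simultaneously maintaining three properties through the induction: (i) the matrix remains $\{\pm 1\}$ and Hadamard (i.e. Hadamard-equivalent to Sylvester of order $n_{t+1}$), (ii) the minimal polynomial is \emph{exactly} $\Phi_{2^{t+1}}$ and not a proper multiple like $x^{2^{t+1}}-1$ — one must exhibit a cyclic vector or argue via the rational canonical form that no lower-degree annihilating polynomial exists, (iii) the characteristic polynomial is a pure power $\Phi_{2^{t+1}}(x)^{n_{t}/2^{t}}$, which needs $2^{t}\mid n_{t}$, true since $2^{t-1}-1\ge t$ for $t\ge 3$. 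Getting (ii) right is delicate because tensoring matrices multiplies eigenvalues and can easily introduce eigenvalue $1$ or $-1$; the construction must be arranged (probably by the "multiply a generator $\zeta_{2^{t}}H$" trick from the Example, shifting all entries to odd powers) so that every eigenvalue of the normalization is a \emph{primitive} $2^{t+1}$st root — this is also exactly what makes the second soundness condition in Definition~\ref{defn:sound} vacuous, enabling the complete-morphism application. I would therefore set up the induction to carry the stronger hypothesis "$\mathfrak{m}(N_{t}) = \Phi_{2^{t+1}}$ \emph{and} every eigenvalue of $N_{t}$ is primitive of order $2^{t+1}$," and close the loop by the block/tensor construction above, finishing with the characteristic-polynomial count and the Hadamard verification.
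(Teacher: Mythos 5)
There is a genuine gap: the heart of your argument --- the inductive step --- is never actually carried out. You reduce everything to ``the key algebraic lemma to prove,'' namely the existence of a $\{\pm 1\}$ Hadamard matrix $D(A)$ of order $n_{t+1} = 2n_t^2$ built from $N_t$ whose normalization satisfies exactly the relation $x^{2^{t+1}}+1=0$, but you give neither a definition of $D(A)$ nor any argument that such a matrix exists. A $2\times 2$ block form like $\bigl(\begin{smallmatrix} 0 & I \\ N_t & 0 \end{smallmatrix}\bigr)$ only doubles the order, so to reach $2n_t^2$ you must tensor with a copy of $N_t$ (or similar), and then --- as you yourself observe --- eigenvalues multiply and you typically pick up roots of unity of the wrong order; nothing in the proposal resolves this, and it is precisely the difficulty. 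So the proposal is a plan rather than a proof, and it is not clear the plan can be completed in this form. For comparison, the paper does not induct at all: it finds a pair of signed permutation matrices $(P,Q)$ with $P\mathcal{S}_nQ^{\top} = \pm\mathcal{S}_n$ (where $\mathcal{S}_n$ is the Sylvester matrix of order $2^n$, $n = 2^{t-1}-1$), uses the identity $(P\mathcal{S}_n)^2 = \pm 2^n PQ$ of Proposition \ref{trick} to reduce the spectral question to the monomial matrix $PQ$, and then arranges (via a Jordan block $L = A(A^{-1})^{\top}$ and suitable translation and sign vectors, Propositions \ref{Jordan Decomposition}--\ref{method}) that every cycle of $PQ$ has length $2^{t-1}$ with sign product $-1$, forcing $\mathfrak{m}(PQ) = \Phi_{2^{t}}$ and hence $\mathfrak{m}(n_t^{-1/2}P\mathcal{S}_n) = \Phi_{2^{t+1}}$.

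A second, smaller problem is your treatment of the base case. The minimal polynomial in the theorem is the minimal polynomial over $\mathbb{Q}$ of the \emph{irrational} matrix $n_t^{-1/2}M$, so its degree may exceed the order of the matrix; there is no contradiction at $t=2$. Indeed for Turyn's matrix $M$ one has $(2^{-1/2}M)^2 = \bigl(\begin{smallmatrix} 0 & 1 \\ -1 & 0 \end{smallmatrix}\bigr)$ and hence $(2^{-1/2}M)^4 = -I$, and since its eigenvalues are primitive $8$th roots of unity, the least monic rational annihilating polynomial is exactly $\Phi_8(x) = x^4+1$ even though $M$ is $2\times 2$. Your proposed ``fix'' of replacing the order-$2$ base case by $M\otimes M$ of order $4$ both fails to prove the stated theorem at $t=2$ (wrong order) and derails the bookkeeping of your own induction, since squaring-and-doubling from order $4$ gives $32$ at $t=3$ rather than $n_3 = 8$. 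Relatedly, your claim that the characteristic polynomial should be the pure power $\Phi_{2^{t+1}}(x)^{n_t/2^t}$ cannot be right as stated for small $t$ (again $t=2$), and in any case it is a statement about rational matrices that needs care once the normalization is irrational.
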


We obtain the following result directly from Theorems \ref{thm:main} and \ref{thm:hadexist}.
\begin{corollary}
\label{cor:explicit}
For each $t \in \mathbb{N}$, there exists a complete morphism
\[ \mathrm{BH}(n, 2^{t}) \rightarrow \mathrm{BH}(2^{2^{t-1}-1}n, 2) \,.\]
\end{corollary}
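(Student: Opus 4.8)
The plan is to realise the asserted morphism as a single application of Theorem \ref{thm:main}, feeding it the matrix $M$ supplied by Theorem \ref{thm:hadexist} and checking that the two conditions of Definition \ref{defn:sound} hold uniformly. Fix $t \in \N$, let $M$ be the real Hadamard matrix of order $m = n_t = 2^{2^{t-1}-1}$ from Theorem \ref{thm:hadexist}, and set $k = 2^{t+1}$ and $\ell = 2$. Because $\mathfrak{m}(m^{-1/2}M) = \Phi_{2^{t+1}}$ is irreducible, the distinct eigenvalues of $U := m^{-1/2}M$ are exactly the primitive $2^{t+1}$-th roots of unity; take $Y$ to be this set. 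Given an arbitrary $H_0 \in \But(n, 2^{t})$, rescale to $H := \zeta_{2^{t+1}} H_0$. Since each entry of $H_0$ is some $\zeta_{2^t}^{a} = \zeta_{2^{t+1}}^{2a}$, each entry of $H$ is an odd power $\zeta_{2^{t+1}}^{2a+1}$, hence a primitive $2^{t+1}$-th root of unity; thus $H \in \But(n, 2^{t+1})$ with every entry in the set $X$ of primitive $2^{t+1}$-th roots. This is precisely the ``translate of the roots of unity of order $2^{t}$'' mechanism noted after Definition \ref{defn:sound}, with $p = 2$.

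First I would dispatch the second soundness condition. For $\zeta_k^{j} \in Y$ the exponent $j$ is odd, hence coprime to $2^{t+1}$, so $H^{(j)}$ is the entrywise Galois conjugate $\sigma_j(H)$ for $\sigma_j \in \mathrm{Gal}(\mathbb{Q}(\zeta_{2^{t+1}})/\mathbb{Q})$. As $\sigma_j$ fixes $n$ and commutes with complex conjugation on the abelian cyclotomic field, $\sigma_j(H)\sigma_j(H)^{\ast} = \sigma_j(HH^{\ast}) = nI_n$, so $H^{(j)} \in \But(n, 2^{t+1})$ and condition (2) holds. Crucially this uses only that the entries of $H$ are primitive $2^{t+1}$-th roots, so it holds simultaneously for every $H_0$.

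The heart of the argument, and the step I expect to be the main obstacle, is the first soundness condition: for every odd $j$ one must show $W_j := \sqrt{m}^{1-j} M^{j} = \sqrt{m}\,U^{j} \in \But(m, 2)$, i.e.\ that $W_j$ is a real $\{\pm 1\}$ Hadamard matrix. The orthogonality relation is free: since $U$ is real orthogonal with $U^{2^{t}} = -I$, each $U^{j}$ is orthogonal and $W_j W_j^{\top} = m\,U^{j}(U^{j})^{\top} = mI$. One also checks quickly that the entries of $W_j$ are rational, because $j$ odd makes $m^{(1-j)/2}$ an integer power of $2$ while $M^{j}$ is an integer matrix. What does \emph{not} follow from the spectral data alone is that every entry of $W_j$ has modulus $1$: an orthogonal matrix with the prescribed eigenvalues need not have constant-modulus entries, so Hadamard's inequality (in the form recalled in the introduction) is not by itself decisive. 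I would therefore derive the $\{\pm 1\}$-property from the explicit structure of the matrix $H_t$ underlying Theorem \ref{thm:hadexist} --- its realisation as a matrix equivalent to the Sylvester matrix with a rich automorphism group --- reducing the claim to the $2$-adic divisibility $2^{(2^{t-1}-1)(j-1)/2} \mid (M^{j})_{ab}$ for all entries, which should be read off from the recursive description of $H_t$ together with the relation $M^{2^{t}} = -m^{2^{t-1}}I$ forced by the minimal polynomial.

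With both conditions verified, $(H, M)$ is $(X,Y)$-sound, so Theorem \ref{thm:main} gives $H^{\phi} \in \But(mn, 2)$ for $\phi : \zeta_{2^{t+1}}^{j} \mapsto \sqrt{m}^{1-j} M^{j}$. The assignment $H_0 \mapsto (\zeta_{2^{t+1}} H_0)^{\phi}$ is then a well-defined function on all of $\But(n, 2^{t})$ landing in $\But(2^{2^{t-1}-1}n, 2)$, which is exactly a complete morphism in the sense of the paper. Finally I would record the boundary case: for $t = 2$ the matrix $M$ is the order-$2$ matrix of the Example, so this recovers Turyn's construction, and the statement holds for every $t$.
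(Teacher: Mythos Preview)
Your overall strategy is correct and matches the paper: take $M$ from Theorem~\ref{thm:hadexist}, scale any $H_{0}\in\But(n,2^{t})$ by $\zeta_{2^{t+1}}$ so that its entries become primitive $2^{t+1}$-th roots, and apply Theorem~\ref{thm:main}. Your treatment of condition~(2) via Galois conjugation is fine (and, as the paper remarks, the condition is essentially vacuous once $Y$ consists of primitive roots).

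The gap is precisely where you flag it: condition~(1). You correctly observe that the spectral data $\mathfrak{m}(m^{-1/2}M)=\Phi_{2^{t+1}}$ alone does not force $\sqrt{m}^{\,1-j}M^{j}$ to have unimodular entries, but the resolution you propose --- a $2$-adic divisibility statement on the entries of $M^{j}$, to be extracted from a ``recursive description of $H_{t}$'' --- is neither the paper's argument nor does it complete the proof. Even granting that the entries of $W_{j}=\sqrt{m}^{\,1-j}M^{j}$ are integers, the identity $W_{j}W_{j}^{\top}=mI$ only tells you each row has squared Euclidean length $m$; it does not by itself rule out rows with some entries $0$ and others of modulus larger than $1$.

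The paper handles this via Proposition~\ref{trick}(2), which uses the specific form $M=P\mathcal{S}_{n}$ with $\mathcal{S}_{n}$ symmetric Hadamard and $P\mathcal{S}_{n}Q^{\top}=(-1)^{\ell}\mathcal{S}_{n}$. From part~(1) one has $(P\mathcal{S}_{n})^{2}=(-1)^{\ell}nPQ$, and since $PQ$ is a $\{\pm1\}$-monomial matrix so is every power of it. Hence for $j=2r+1$ odd,
\[
\sqrt{n}^{\,1-j}(P\mathcal{S}_{n})^{j}=n^{-r}\bigl((-1)^{\ell}nPQ\bigr)^{r}(P\mathcal{S}_{n})=(-1)^{\ell r}(PQ)^{r}(P\mathcal{S}_{n}),
\]
which is a signed row permutation of the $\{\pm1\}$-Hadamard matrix $P\mathcal{S}_{n}$, hence itself in $\But(n_{t},2)$. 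That one-line identity is the missing ingredient; once you invoke it, your argument goes through and coincides with the paper's.
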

Equivalently, whenever there exists an order $n$ Hadamard matrix with entries in $\langle \zeta_{2^{t}}\rangle$,
there exists a real Hadamard matrix of order $2^{2^{t-1} - 1}n$.  This result generalizes a classical result of Turyn,
who (in our terminology) constructed a complete morphism $\But(n, 4) \rightarrow \But(2n, 2)$, which is to our knowledge the first example of a
nontrivial morphism of Hadamard matrices appearing in the literature \cite{Turyn}. This is the case $t = 2$ of Corollary \ref{cor:explicit}; the case $t = 3$ is computed in Example \ref{ex:2}.

The complete morphisms of Theorem \ref{thm:hadexist} can be considered representations of the cyclic groups of order $2^{t+1}$ in which all $2^{t}$ generators are represented by Hadamard matrices. This result is complementary to Gow's construction of a representation for the cyclic group of order $2^{t}+1$ in dimension $2^{t}$ where all non-identity elements are represented by normalized (complex) Hadamard matrices \cite{GowEasy}. He has constructed similar representations for the cyclic groups of even order in which every element but the unique involution is represented by a Hadamard matrix. While we do not obtain Hadamard matrices for all nontrivial elements in Theorem \ref{thm:hadexist}, we do retain complete control of the irreducible constituents of the representation: the irreducible representations which exhibit a primitive root of unity of order $2^{t+1}$ all occur with equal multiplicity, and no other representations occur.

While this paper was being finalized, the authors became aware of a recent result of {\"O}sterg{\aa}rd and Paavola \cite{OstergardButson} which gives a direct construction of morphisms  \[ \mathrm{BH}(n, 2^{t+1}) \rightarrow \mathrm{BH}(2n, 2^{t})\,, \]
for each $t \geq 2$. While their method produces morphisms which exhibit a smaller blow-up in dimension, our method generalizes easily to control the characteristic polynomial of Hadamard matrices which have sufficiently large automorphism groups.


This paper is organized as follows.  Section \ref{sect:autsyl} provides information about a certain family of group actions on Sylvester matrices, and Section \ref{sect:charhad} describes a method for computing the characteristic polynomial of the Hadamard matrix $PH$ when $H$ is symmetric Hadamard and $(P,Q)$ is a pair of monomial matrices acting suitably on $H$. Finally, Section \ref{sect:toward}
provides an explicit construction which proves Theorem \ref{thm:hadexist}.

\section{A group action on Sylvester matrices}
\label{sect:autsyl}

Recall that a pair $(P, Q)$ of $\{\pm 1\}$-monomial matrices is an \textit{automorphism} of the Hadamard matrix $H$ if and only if $PHQ^{\top} = H$.
In this section, we will describe a family of groups closely related to the automorphism groups of the Sylvester matrices.
Let $V$ be an $n$-dimensional vector space over $\mathbb{F}_{2}$ with a fixed basis. With respect to this basis, we write elements of $a, b \in V$ as column of vectors,
and define an inner product on $V$ by $\langle a,b\rangle = a^{\top}b$.
By abuse of notation, we identify $0, 1 \in \mathbb{F}_{2}$ with their pre-images in $\mathbb{Z}$ in our definition of the Sylvester matrix $\mathcal{S}_n$ of order $2^{n}$:
\begin{equation}\label{Sylvester}
\mathcal{S}_{n} := \left[ (-1)^{\langle a, b \rangle} \right]_{a, b \in V}\,.
\end{equation}
We assume that the row and column labels occur in the same order; in this case $\mathcal{S}_{n}$ is a symmetric matrix. If an explicit ordering of labels is required, we will use a lexicographic order. It is well known that $\mathcal{S}_{n}$ is the character table of an elementary abelian $2$-group and has a rich automorphism group, which
has been studied in detail in \cite{EganFlannery}.

\begin{definition}
Recall that $\mathrm{GL}_{n}(\mathbb{F}_{2})$ is the group of invertible matrices over $\mathbb{F}_{2}$.
Let $V$ be an $n$-dimensional vector space over $\mathbb{F}_{2}$ with specified basis (so that the action of $\mathrm{GL}_{n}(\mathbb{F}_{2})$
is well defined). Define the group $\mathcal{G}_{n} = (V \times V) \rtimes \mathrm{GL}_{n}(\mathbb{F}_{2})$ by the multiplication
\[ (u_{1}, v_{1}, L_{1}) \cdot (u_{2}, v_{2}, L_{2}) = (u_{1} + (L_{1}^{-1})^{\top}u_{2}, v_{1} + L_{1}v_{2}, L_{1}L_{2})\,. \]
\end{definition}

Denote by $r_{a}$ the row of $\mathcal{S}_{n}$ labeled by $a \in V$ and by $c_{b}$ the column labeled by $b$.

We define an action of $\mathcal{G}_{n}$ on the set $\{ r_{a}, -r_{a} \mid a \in V\}$ by
\begin{equation}\label{rowaction} (u,v,L) \cdot r_a = (-1)^{\langle La, u\rangle}r_{La + v}\,, \end{equation}
and an action on the set $\{c_{b}, -c_{b} \mid b \in V\}$ by
\begin{equation}\label{colaction} (u,v,L) \cdot c_{b} = (-1)^{\langle v, (L^{-1})^{\top} b \rangle} c_{(L^{-1})^{\top}b + u}\,. \end{equation}

We note that the subgroup $\{ (0, v, L) \mid v \in V, L \in \mathrm{GL}_{n}(\mathbb{F}_{2})\}$ acts as a doubly transitive permutation group on the set $\{ r_{a} \mid a \in V\}$, usually referred to as the \textit{affine general linear group} $\mathrm{AGL}_{n}(\mathbb{F}_{2})$.

\begin{proposition}\label{newaut}
In the action of $\mathcal{G}_{n}$ on $\mathcal{S}_{n}$ induced by the actions on rows and columns of Equations \eqref{rowaction} and \eqref{colaction},
we have $(u,v,L) \cdot \mathcal{S}_{n} = (-1)^{\langle u,v\rangle}\mathcal{S}_{n}$.
\end{proposition}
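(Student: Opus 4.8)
The plan is to verify directly that applying the group element $(u,v,L)$ to the rows and columns of $\mathcal{S}_n$, according to Equations \eqref{rowaction} and \eqref{colaction}, reproduces $\mathcal{S}_n$ up to the global sign $(-1)^{\langle u,v\rangle}$. First I would fix the $(a,b)$-entry of $\mathcal{S}_n$, namely $(-1)^{\langle a,b\rangle}$, and ask where it is sent. The row action moves row $r_a$ to (a signed copy of) the row now labeled by the image index; to track which entry of the transformed matrix equals the old $(a,b)$-entry, it is cleaner to compute the new $(a',b')$-entry in terms of old data by inverting the index maps. From \eqref{rowaction}, row $r_{a'}$ of the transformed matrix is $(-1)^{\langle a', u\rangle} r_{a}$ where $a = L^{-1}(a' + v)$, wait — more carefully, $(u,v,L)\cdot r_a = (-1)^{\langle La,u\rangle} r_{La+v}$, so the row labeled $a'$ in the new matrix comes from the old row $a$ with $La + v = a'$, i.e. $a = L^{-1}(a'+v)$, carrying a sign $(-1)^{\langle La, u\rangle} = (-1)^{\langle a'+v, u\rangle}$. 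Similarly from \eqref{colaction}, the column labeled $b'$ in the new matrix comes from the old column $b$ with $(L^{-1})^{\top} b + u = b'$, i.e. $b = L^{\top}(b' + u)$, carrying a sign $(-1)^{\langle v, (L^{-1})^{\top} b\rangle} = (-1)^{\langle v, b'+u\rangle}$.

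Next I would assemble the new $(a',b')$-entry: it equals the sign from the row action, times the sign from the column action, times the old entry $(-1)^{\langle a, b\rangle} = (-1)^{\langle L^{-1}(a'+v),\, L^{\top}(b'+u)\rangle}$. The key computation is to simplify $\langle L^{-1}(a'+v),\, L^{\top}(b'+u)\rangle$. Using $\langle x, L^{\top} y\rangle = (L^{\top}y)^{\top} x = y^{\top} L x = \langle Lx, y\rangle$ — or, equivalently, the adjoint relation $\langle L^{-1}w, L^{\top}z\rangle = \langle w, z\rangle$ over $\mathbb{F}_2$ — this collapses to $\langle a'+v,\, b'+u\rangle$. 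Expanding bilinearly, $\langle a'+v, b'+u\rangle = \langle a',b'\rangle + \langle a', u\rangle + \langle v, b'\rangle + \langle v, u\rangle$. Multiplying in the two tracked signs $(-1)^{\langle a'+v,u\rangle}$ and $(-1)^{\langle v,b'+u\rangle}$, the exponents add (mod $2$): we get $\langle a',b'\rangle + \langle a',u\rangle + \langle v,b'\rangle + \langle u,v\rangle$ from the entry, plus $\langle a',u\rangle + \langle v,u\rangle$ from the row sign, plus $\langle v,b'\rangle + \langle v,u\rangle$ from the column sign. The cross terms $\langle a',u\rangle$ and $\langle v,b'\rangle$ each appear twice and cancel mod $2$, and $\langle u,v\rangle$ appears three times, leaving $\langle a',b'\rangle + \langle u,v\rangle$ (using $\langle u,v\rangle = \langle v,u\rangle$ for the symmetric form). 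Hence the new $(a',b')$-entry is $(-1)^{\langle u,v\rangle}(-1)^{\langle a',b'\rangle} = (-1)^{\langle u,v\rangle}(\mathcal{S}_n)_{a',b'}$, which is exactly the claim.

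I would also remark that the action on rows and columns as defined is consistent — each sends the set $\{\pm r_a\}$ (resp. $\{\pm c_b\}$) to itself and respects the group law given in the definition of $\mathcal{G}_n$ — so that "$(u,v,L)\cdot \mathcal{S}_n$" is well-defined; this is a routine check of the twisted multiplication formula and can be stated in a sentence. The only genuine subtlety, and the step I expect to need the most care, is the adjointness identity $\langle L^{-1}w, L^{\top}z\rangle = \langle w,z\rangle$ together with keeping the bookkeeping of the four sign contributions straight modulo $2$; everything else is symbol-pushing with the $\mathbb{F}_2$-bilinear form. Once the entrywise identity above is in hand, the proposition follows immediately.
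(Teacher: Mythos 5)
Your proof is correct and is essentially the paper's argument: a direct entrywise check that the row sign, the column sign, and the permuted Sylvester entry combine, via bilinearity of the form and the identity $\langle L^{-1}w, L^{\top}z\rangle = \langle w,z\rangle$, to give the global factor $(-1)^{\langle u,v\rangle}$. The only difference is bookkeeping — you pull back the new $(a',b')$-entry while the paper pushes the old $(a,b)$-entry forward to position $(La+v,(L^{-1})^{\top}b+u)$ — which is the same computation.
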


\begin{proof}
Under the action of the element $(u,v,L)$, up to a $\pm 1$ sign change, row $r_a$ is mapped to row $r_{La + v}$ and column $c_b$ is mapped to column $c_{(L^{-1})^\top b + u}$. Moreover, these mappings induce additional factors of $(-1)^{\langle La, u\rangle}$ and $(-1)^{\langle v, (L^{-1})^\top b \rangle}$, respectively.  Hence,
\[(u,v,L) \cdot (-1)^{\langle a,b \rangle} = (-1)^{\langle La, u\rangle + \langle v, (L^{-1})^\top b \rangle + \langle La + v,(L^{-1})^\top b + u \rangle} = (-1)^{\langle u,v\rangle} (-1)^{\langle a,b \rangle},\]
as desired.
\end{proof}

The action of $\mathcal{G}_{n}$ described in Proposition \ref{newaut} can be realized in a natural way as a group of pairs of monomial matrices acting on the rows and columns of $\mathcal{S}_{n}$; write $\Psi(\mathcal{G}_{n})$ for this representation. Following the usual convention in the literature on Hadamard matrices, the transpose of the righthand component of $\Psi(u,v,L)$ acts on the right of $\mathcal{S}_{n}$.

\begin{definition}\label{components}
Label the rows and columns of a $2^n \times 2^n$ matrix $M \in \mathrm{GL}_{2^n}(\mathbb{C})$ by the vectors of $V=\F_{2}^{n}$. Let $\rho(L) \in \mathrm{GL}_{2^n}(\mathbb{C})$ be the permutation matrix given by the action of $L \in \mathrm{GL}_{n}(\F_{2})$ on the elements of $V$.  Explicitly, the entry in row $u$ and column $v$ of $\rho(L)$ is $1$ if and only if $Lv = u$.  Similarly for each $v \in V$ we define the translation operator $T_{v} \in \mathrm{GL}_{2^n}(\mathbb{C})$ to be the permutation matrix with $1$ in row $u$ and column $u+v$ for every $u \in V$.  Finally, we define the diagonal matrix $D_{v} \in \mathrm{GL}_{2^n}(\mathbb{C})$ by $(-1)^{\langle u,v \rangle}$ in the row and column labeled by $u$ for every $u \in V$.
\end{definition}

It will be convenient in the sequel to have some notation relating the constituents $(u, v,L)$ of an element of $\mathcal{G}_{n}$ with matrices in characteristic $0$.
\begin{itemize}
    \item $\Psi(0, 0, L) = ( \rho(L), \rho((L^{-1})^{\top}))$.
    \item $\Psi(0, v, I) = ( T_{v}, D_{v})$.
    \item $\Psi(u, 0, I) = ( D_{u}, T_{u})$.
\end{itemize}

In the current work we are interested only in constructing Hadamard matrices where the characteristic polynomial is an even function; in this case we have that $\chi(H) = \chi(-H)$. As such, we do not require that $P\mathcal{S}_{n}Q^{\top} = \mathcal{S}_{n}$, rather we require only that $P\mathcal{S}_{n}Q^{\top} = \pm \mathcal{S}_{n}$.

\begin{remark}
The group $\langle (-I, I), (I, -I), \Psi(\mathcal{G}_{n})\rangle$ is of order $4|\mathcal{G}_{n}|$ and acts transitively on the set $\{\mathcal{S}_{n}, -\mathcal{S}_{n}\}$. The stabilizer of $\mathcal{S}_{n}$ under this action is a central extension of shape $2 \cdot \mathcal{G}_{n}$, and this group is in fact the full automorphism group of $\mathcal{S}_{n}$ \cite{EganFlannery}.
\end{remark}

\section{Characteristic polynomials of Hadamard matrices}
\label{sect:charhad}

The following proposition reduces the computation of characteristic polynomials of certain Hadamard matrices to computing characteristic polynomials of monomial matrices, for which there exist satisfactory techniques. This method was inspired by the elementary evaluation of the spectrum of the Discrete Fourier Transform matrix of Diaz-Vargas, Glebsky, and Rubio-Barrios \cite{FourierSpectrum}.

\begin{proposition}\label{trick}
Suppose that $H$ is a symmetric Hadamard matrix, and that $P, Q$ are monomial matrices such that $PHQ^{\top} = (-1)^{\ell}H$, where $\ell \in \{0,1\}$.
\begin{enumerate}
   \item The identity $(PH)^{2}=(-1)^{\ell}nPQ$ holds; in particular the spectra of $(PH)^{2}$ and of $(-1)^{\ell}nPQ$ are identical.
   \item The matrix $\sqrt{n}^{1-j}(PH)^j$ is Hadamard for all odd $j$.
   \item If $\mathfrak{m}(PQ) = \Phi_{2^{t}}(x)$, then $\mathfrak{m}(n^{-1/2}PH) = \Phi_{2^{t+1}}(x)$.
\end{enumerate}
\end{proposition}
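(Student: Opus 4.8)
The plan is to prove the three parts in order, since each builds on the previous one. For part (1), I would start from the hypothesis $PHQ^\top = (-1)^\ell H$ and rearrange to get $PH = (-1)^\ell H Q$. Since $H$ is symmetric, $H = H^\top$, and since $H$ is Hadamard, $HH^\top = nI$, so $H^2 = nI$. Then I compute $(PH)^2 = PH \cdot PH = PH \cdot (-1)^\ell HQ = (-1)^\ell P H^2 Q = (-1)^\ell n PQ$. The claim about spectra is immediate since the two matrices are literally equal. This part is routine.

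For part (2), I would observe that $PH$ is a product of a monomial $\{\pm1\}$-matrix and a Hadamard matrix, hence is itself a (real, $\{\pm1\}$-entried) Hadamard matrix of order $n$. The key point is that for odd $j = 2s+1$, part (1) gives $(PH)^j = (PH)^{2s}(PH) = ((-1)^\ell n PQ)^s (PH) = (-1)^{\ell s} n^s (PQ)^s PH$, which up to sign is $n^s$ times a product of a monomial matrix $(PQ)^s P$ with the Hadamard matrix $H$; such a product is again $\{\pm1\}$-monomial-times-Hadamard, hence Hadamard of order $n$. Therefore $\sqrt{n}^{1-j}(PH)^j = \sqrt{n}^{1-j} n^s \cdot (\pm\text{Hadamard}) = \sqrt{n}^{\,1-2s-1+2s}(\pm\text{Hadamard}) = \pm(\text{Hadamard})$, which is Hadamard. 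I should be slightly careful to confirm the scaling exponent works out to give something of modulus-one rescaled entries; the arithmetic $1-j+2s = 1-(2s+1)+2s = 0$ confirms it.

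For part (3), I would use part (1) together with the spectral mapping theorem. Write $A = n^{-1/2}PH$. Then $A^2 = n^{-1}(PH)^2 = (-1)^\ell PQ$ by part (1), so $PQ = (-1)^\ell A^2$. The hypothesis $\mathfrak{m}(PQ) = \Phi_{2^t}(x)$ means $\Phi_{2^t}\big((-1)^\ell A^2\big) = 0$ and no polynomial of smaller degree dividing it annihilates $PQ$. Since $\Phi_{2^t}(x) = x^{2^{t-1}} + 1$ for $t \geq 1$ (and one checks the sign $(-1)^\ell$ is absorbed because $2^{t-1}$ is even for $t \geq 2$, so $\big((-1)^\ell A^2\big)^{2^{t-1}} = A^{2^t}$), this says $A^{2^t} + 1 = 0$, i.e. $\Phi_{2^{t+1}}(A) = 0$. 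Hence $\mathfrak{m}(A)$ divides $\Phi_{2^{t+1}}(x) = x^{2^t}+1$. For the reverse containment, I would argue that the minimal polynomial cannot be a proper divisor: every root of $\mathfrak{m}(A)$ is a primitive $2^{t+1}$-th root of unity (these are the roots of $\Phi_{2^{t+1}}$, which is irreducible over $\mathbb{Q}$), and since $\mathfrak{m}(A)$ has rational — indeed integer — coefficients (as $PH$ is an integer matrix), it must be a product of $\mathbb{Q}$-irreducible factors; the only irreducible factor of $\Phi_{2^{t+1}}$ over $\mathbb{Q}$ is $\Phi_{2^{t+1}}$ itself. Therefore $\mathfrak{m}(A) = \Phi_{2^{t+1}}(x)$, provided $\mathfrak{m}(A)$ is not constant, which holds since $A \neq 0$.

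The main obstacle I anticipate is the irreducibility/minimality argument in part (3): one must rule out that $\mathfrak{m}(n^{-1/2}PH)$ is a proper divisor of $x^{2^t}+1$. The clean way is the observation that $n^{-1/2}PH$ has minimal polynomial with rational coefficients (since $PH \in M_n(\mathbb{Z})$, scaling gives a matrix whose minimal polynomial, after clearing the $\sqrt n$, has rational coefficients — more carefully, $\mathfrak{m}(n^{-1/2}PH)(x) = n^{-d/2}\mathfrak{m}(PH)(\sqrt n\, x)$ up to normalization, and $\Phi_{2^{t+1}}$ being even in $x$ means this lands in $\mathbb{Q}[x]$), combined with the classical irreducibility of the cyclotomic polynomial $\Phi_{2^{t+1}}$ over $\mathbb{Q}$. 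An alternative, if one wants to avoid invoking the rational structure, is to note that $PH$ and hence $A$ is a normal-up-to-scaling real matrix whose eigenvalues all have modulus $1$ and whose square $(-1)^\ell PQ$ has $\Phi_{2^t}$ as its minimal polynomial, forcing the eigenvalues of $A^2$ to be exactly the primitive $2^t$-th roots of unity, each appearing; taking square roots, the eigenvalues of $A$ are among the primitive $2^{t+1}$-th roots of unity, and a parity/conjugation argument (real matrix, so eigenvalues come in conjugate pairs; $A^2$ hits every primitive $2^t$-th root) shows that in fact every primitive $2^{t+1}$-th root occurs, giving $\mathfrak{m}(A) = \Phi_{2^{t+1}}$. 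I would present the first (rational-coefficients) argument as the primary one since it is shortest.
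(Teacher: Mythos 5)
Parts (1) and (2) of your proposal are correct and essentially identical to the paper's proof: the computation $(PH)^{2}=(-1)^{\ell}nPQ$ and the expansion $(PH)^{2s+1}=(-1)^{\ell s}n^{s}(PQ)^{s}(PH)$, a monomial matrix times a Hadamard matrix up to scaling, are exactly the paper's argument. Your explicit handling of the sign $(-1)^{\ell}$ in part (3), via $\bigl((-1)^{\ell}A^{2}\bigr)^{2^{t-1}}=A^{2^{t}}$ for $t\ge 2$, is a point the paper leaves implicit, and your divisibility half ($\Phi_{2^{t+1}}(A)=0$, so every eigenvalue of $A=n^{-1/2}PH$ is a primitive $2^{t+1}$-th root of unity) is fine.

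The gap is in the other half of part (3). Your primary argument rests on the claim that $\mathfrak{m}(n^{-1/2}PH)$ has rational coefficients, but the justification is circular: $n^{-d/2}\mathfrak{m}(PH)(\sqrt{n}\,x)$ lies in $\mathbb{Q}[x]$ only if $\mathfrak{m}(PH)$ has terms of a single parity, which is essentially what is to be proved. Your fallback argument (real matrix, conjugate pairs, $A^{2}$ hits every primitive $2^{t}$-th root, hence \emph{every} primitive $2^{t+1}$-th root occurs) is in fact false under the stated hypotheses: take $H=\left[\begin{smallmatrix}1&1\\1&-1\end{smallmatrix}\right]$, $P=\left[\begin{smallmatrix}1&0\\0&-1\end{smallmatrix}\right]$, $Q=\left[\begin{smallmatrix}0&1\\1&0\end{smallmatrix}\right]$; then $PHQ^{\top}=H$ and $\mathfrak{m}(PQ)=\Phi_{4}$, yet $2^{-1/2}PH$ has eigenvalues $\zeta_{8}^{\pm 1}$ only, with complex minimal polynomial $x^{2}-\sqrt{2}\,x+1$ (its order is $2$, so no degree-$4$ polynomial could be its complex minimal polynomial). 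This is precisely Turyn's matrix, whose eigenvalue set is $Y=\{\zeta_{8},\zeta_{8}^{7}\}$ in the paper's own example, so neither rationality of the complex minimal polynomial nor "all primitive roots occur" can be deduced from the hypotheses. The statement must be read --- as the paper's phrase "since $P$ and $H$ are defined over $\mathbb{Q}$" signals, and as the $t=2$ case of Theorem \ref{thm:hadexist} forces --- as concerning the minimal polynomial over $\mathbb{Q}$, i.e.\ the monic generator of the ideal of rational polynomials annihilating $n^{-1/2}PH$. With that reading the argument closes immediately: $PH$ is normal, hence diagonalisable, every eigenvalue of $n^{-1/2}PH$ is a primitive $2^{t+1}$-th root of unity, so any rational annihilating polynomial is divisible by the $\mathbb{Q}$-irreducible $\Phi_{2^{t+1}}$, while your computation $A^{2^{t}}=-I$ shows $\Phi_{2^{t+1}}$ itself annihilates. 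So keep your divisibility step, but replace the rationality/conjugation argument by this over-$\mathbb{Q}$ interpretation, which is what the paper's proof does.
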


\begin{proof}
\begin{enumerate}
    \item Since by definition $PHQ^{\top} = (-1)^{\ell}H$, and $Q$ is orthogonal, we have $PH = (-1)^{\ell}HQ$. So then
\[ (PH)^{2} = PH(-1)^{\ell}HQ = (-1)^{\ell}nPQ \,. \]
    \item By the above, we have that $(PH)^{2m+1} = (-1)^{\ell m}n^{m}(PQ)^m(PH)$, which up to normalisation is the product of a monomial matrix with a Hadamard matrix.
    \item Since $PH$ is Hadamard, it is normal and so diagonalisable. Suppose that $v$ is an eigenvector of $n^{-1}(PH)^{2}$ with eigenvalue $\zeta_{2^{t}}^{j}$, a root of unity of order $2^{t}$ for some positive integer $t$. Then the corresponding eigenvalue of $n^{-1/2}PH$ is a solution to the quadratic equation $x^{2} - \zeta_{2^{t}}^{j}$, which is necessarily a root of unity of order $2^{t+1}$. Since $P$ and $H$ are defined over $\mathbb{Q}$, whenever $\mathfrak{m}(n^{-1}(PH)^{2}) = \Phi_{2^{t}}(x)$, we must have $\mathfrak{m}(n^{-1/2}PH) = \Phi_{2^{t+1}}(x)$.\qedhere
\end{enumerate}
\end{proof}

We digress briefly to compute the characteristic polynomial of a monomial matrix. This material is well known, but is included for completeness.
We recall that a monomial matrix $M$ can be written uniquely in the form $M = DP$, where $D$ is a diagonal matrix and $P$ is a permutation matrix. We begin with a well-known lemma which describes the characteristic polynomial of a permutation matrix (i.e., when $D$ is trivial).

\begin{lemma} \label{permlemma}
Let $P$ be a permutation matrix with disjoint cycles $C_{1}$, $C_{2}$, \ldots, $C_{t}$ of lengths $n_{1}, \ldots, n_{t}$. Then the characteristic polynomial of $P$ is $\prod_{j = 1}^{t} \left( x^{n_{j}} - 1\right)$.
\end{lemma}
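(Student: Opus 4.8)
The plan is to reduce to the single-cycle case by a block decomposition and then evaluate one circulant determinant directly. First I would observe that conjugating $P$ by a permutation matrix leaves its characteristic polynomial unchanged, so I may reorder the standard basis $e_{1}, \ldots, e_{n}$ so that indices are grouped according to the cycles $C_{1}, \ldots, C_{t}$, with the indices inside the block for $C_{j}$ listed in cyclic order. In this basis $P$ becomes block diagonal, $P = P_{1} \oplus \cdots \oplus P_{t}$, where $P_{j}$ is the $n_{j} \times n_{j}$ permutation matrix of a single $n_{j}$-cycle. Since the characteristic polynomial of a block-diagonal matrix is the product of the characteristic polynomials of the blocks, it suffices to prove $\chi(P_{j}) = x^{n_{j}} - 1$ for each $j$.

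Next I would compute the characteristic polynomial of an $m \times m$ single-cycle permutation matrix $C$. The cleanest route is the eigenvalue argument: for each $m$-th root of unity $\zeta_{m}^{k}$ (with $0 \le k \le m-1$), the vector $(1, \zeta_{m}^{k}, \zeta_{m}^{2k}, \ldots, \zeta_{m}^{(m-1)k})^{\top}$ is an eigenvector of $C$ with eigenvalue $\zeta_{m}^{k}$. This produces $m$ distinct eigenvalues, hence all of them, each simple, so $\chi(C) = \prod_{k=0}^{m-1} (x - \zeta_{m}^{k}) = x^{m} - 1$. Alternatively one can expand $\det(xI - C)$ along the first row, where only two entries are nonzero, and observe that the two resulting minors are triangular, again yielding $x^{m} - 1$; I would mention this only as a remark, since the bookkeeping with signs is the one mildly fiddly point and the eigenvalue computation avoids it.

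Assembling the pieces, $\chi(P) = \prod_{j=1}^{t} \chi(P_{j}) = \prod_{j=1}^{t} (x^{n_{j}} - 1)$, as claimed. There is no genuine obstacle here; the result is entirely standard, and the only thing to be careful about is that the reordering of the basis is realized by a permutation matrix (so the characteristic polynomial is preserved) and that each diagonal block really is a single-cycle permutation matrix under the chosen ordering.
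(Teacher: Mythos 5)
Your proof is correct, and its overall shape matches the paper's: both arguments conjugate $P$ by a permutation matrix to group the indices cycle by cycle, note that this preserves the characteristic polynomial, and then multiply the contributions of the blocks. The only real difference is how the single-cycle block is handled. The paper observes that, with the indices in cyclic order, the block for an $n_j$-cycle is a companion matrix (it is in rational canonical form), so its characteristic polynomial $x^{n_j}-1$ can be read off with no computation and over any field. You instead diagonalize the block over $\mathbb{C}$ by exhibiting the eigenvectors $(1,\zeta_{m}^{k},\ldots,\zeta_{m}^{(m-1)k})^{\top}$, obtaining $m$ distinct simple eigenvalues and hence $\chi(C)=x^{m}-1$. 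Both are complete; the companion-matrix observation is slightly more economical and field-independent, while your eigenvector argument makes the spectrum explicit (which is in the spirit of the paper anyway, and is the same device that generalizes less readily to the monomial case treated in Proposition~\ref{monomialprop}, where the paper's rational-canonical-form viewpoint is the one that carries over).
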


\begin{proof}
Conjugation in the symmetric group corresponds to similarity by a permutation matrix in the general linear group. It is well known that these operations preserve the characteristic polynomial. Up to conjugation in the symmetric group, we may assume that $C_{1} = (1, 2, 3, \ldots, n_{1})$. The first $n_{1}$ rows and columns of the corresponding permutation matrix
is then in Rational Canonical Form with characteristic polynomial $x^{n_{1}}-1$. Likewise, $C_{2}$ can be conjugated to the cycle $(n_{1} + 1, n_{1} + 2, \ldots, n_{1} + n_{2})$, which yields a block with characteristic polynomial $x^{n_{2}} - 1$, and the result follows by proceeding inductively.
\end{proof}

The extension to monomial matrices is routine, being a slight generalisation of the theory developed by Carter to describe the conjugacy classes of the Coxeter group of type $C_{n}$ (which consists of $\{\pm 1\}$-monomial matrices) \cite[Proposition 24]{Carter}.

\begin{proposition}\label{monomialprop}
Suppose that $M = DP$ is monomial where $D$ is diagonal and $P$ is a permutation matrix.
For each cycle $C_{j}$ of $P$, write $c_{j}$ for the product of the corresponding nonzero entries of $M$. Then the characteristic polynomial of $M$ is
\[ \prod_{j = 1}^{t} \left(x^{n_{j}} - c_{j}\right)\,.\]
\end{proposition}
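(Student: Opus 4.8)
The plan is to mimic the proof of Lemma \ref{permlemma}, exploiting the fact that similarity by a permutation matrix preserves the characteristic polynomial and that conjugating $M = DP$ by a permutation matrix $R$ replaces $P$ by $RPR^{-1}$ and $D$ by $RDR^{-1}$ (which is still diagonal, merely permuting the diagonal entries). Using this, I would first reduce to the case where $P$ is a single $n$-cycle: since $P$ decomposes into disjoint cycles, one can simultaneously block-diagonalise $M$ so that the block indexed by the support of the cycle $C_{j}$ is of the form $D_{j}P_{j}$, with $P_{j}$ an $n_{j}$-cycle and $D_{j}$ the diagonal matrix carrying the corresponding nonzero entries of $M$; the characteristic polynomial is the product of the characteristic polynomials of these blocks. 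This step is routine and essentially identical to the permutation-matrix case.

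Next, I would handle a single cycle: after conjugation, assume $P = (1\,2\,\cdots\,n)$ and $D = \mathrm{Diag}(d_{1}, \ldots, d_{n})$, so the product of the nonzero entries around the cycle is $c = d_{1}d_{2}\cdots d_{n}$. I would compute $\det(xI - DP)$ directly by expanding along the first row (or first column). The matrix $xI - DP$ has $x$ on the diagonal, a single entry $-d_{i}$ in each row corresponding to the cyclic shift, and zeros elsewhere, so the cofactor expansion has only two surviving terms: the main diagonal product $x^{n}$, and the term coming from the lone off-diagonal entry, which after expanding the remaining triangular minor contributes $(-1)^{n+1}(-d_{1})(-d_{2})\cdots(-d_{n}) = (-1)^{n+1}(-1)^{n} c = -c$. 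Hence the block contributes $x^{n} - c$. Alternatively, and perhaps more cleanly, I could absorb all the diagonal entries except one: conjugating $DP$ by a suitable diagonal matrix $E$ turns $D$ into $\mathrm{Diag}(c, 1, 1, \ldots, 1)$ while leaving $P$ unchanged (this is the standard trick of pushing diagonal "gauge" around a cycle, and it works precisely because only the product $c$ around the cycle is a conjugation invariant); then $EDE^{-1}P$ is in rational canonical form with companion polynomial $x^{n} - c$, paralleling the argument in Lemma \ref{permlemma}.

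The only subtlety — and the step I would be most careful with — is the diagonal conjugation that normalises $D$ to put all the "mass" on a single entry: one must check that for a single cycle such an $E$ exists (set up the telescoping system $e_{i+1}/e_{i} = d_{i}$ for $i < n$ with $e_{1} = 1$, which is uniquely solvable and leaves the $n$-th relation automatically giving the product $c$ in the remaining slot), and that over a field the entries $e_{i}$ are genuinely nonzero provided the $d_{i}$ are (which they are, since $M$ is monomial). Since the $c_{j}$ are defined as the products of nonzero entries around each cycle, these are exactly the conjugation invariants, so the final formula $\prod_{j=1}^{t}(x^{n_{j}} - c_{j})$ follows by combining the block decomposition with the single-cycle computation. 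I expect no real obstacle here; the result is, as the text notes, a routine generalisation of Lemma \ref{permlemma} and of Carter's analysis of $C_{n}$.
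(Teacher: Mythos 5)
Your proposal is correct and its main route --- reducing to a single cycle by block decomposition and then conjugating by a diagonal matrix to push all the diagonal ``gauge'' around the cycle into a single entry equal to $c_j$, leaving a companion-type block with characteristic polynomial $x^{n_j} - c_j$ --- is essentially identical to the paper's proof, which performs exactly this telescoping diagonal conjugation (their matrices $F_2, F_3, \ldots$). Your alternative direct expansion of $\det(xI - DP)$ for a single cycle is a fine extra check but not a substantively different argument, so there is nothing further to flag.
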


\begin{proof}
Up to similarity, we may assume that $P$ is in the standard form described in Lemma \ref{permlemma}.
Consider the cycle $C_{1}$: suppose that the nonzero entries in this cycle are
$a_{1}, a_{2}, \ldots, a_{n_{1}}$. Let $F_{2}$ be the diagonal matrix with $f_{22} = a_{1}^{-1}$,
and all other entries $1$. Then $F_{2}CF_{2}^{-1}$ is similar to $C_{1}$ but has $1$ as the nonzero entry
in the first column, and $a_{1}a_{2}$ in the second. In a similar fashion, one sets $F_{3}$ to be the
matrix which differs from the identity only in that $f_{33} = a_{1}^{-1}a_{2}^{-1}$. Then
$F_{3}F_{2}C_{1}F_{2}^{-1}F_{3}^{-1}$ has two entries equal to $1$.

Proceeding in this fashion, we can set all entries but the one in the last column to be $1$. The entry in the last column is $c_{1} = \prod_{j = 1}^{n_{1}} a_{j}$; and
since the matrix is in rational canonical form, the characteristic polynomial of the block corresponding to $C_{1}$ is $x^{n_{1}} - c_{1}$. This generalizes naturally to a product of cycles.
\end{proof}

\section{Proof of main theorem}
\label{sect:toward}

Now, consider $(P, Q) = (D_{u}T_{v}\rho(A), D_{v}T_{u}\rho((A^{-1})^{\top})) \in \Psi(\mathcal{G}_{n})$ acting on $\mathcal{S}_{n}$ as described in Section \ref{sect:autsyl}. Using the multiplication operation defined on $\mathcal{G}_{n}$, we may write the product $PQ$ as follows:
\begin{equation}\label{PQ product}
PQ =  D_{u+(A^{-1})^{\top}v}T_{v + Au}\rho(A(A^{-1})^{\top})\,.
\end{equation}

Suppose that a monomial matrix $PQ$ is given, where $(P, Q) \in \Psi(\mathcal{G}_{n})$. To construct a matrix $H = P \mathcal{S}_{n}$ such that $\chi((PH)^{2}) = \chi(nPQ)$, we require three things:
\begin{enumerate}
    \item A matrix $L \in \mathrm{GL}_{n}(\mathbb{F}_{2})$ which is of the form $L = A(A^{-1})^{\top}$.
    \item A vector $v \in V$ such that $(0, v, L) \in \mathrm{AGL}_{n}(\mathbb{F}_{2})$ has all cycles of length $2^{t-1}$.
    \item Another vector $u$ such that in every cycle of the signed permutation $\Psi(u, v, L)$ the product of the nonzero elements is $-1$.
\end{enumerate}

Given an invertible matrix $A \in \mathrm{GL}_{n}(\mathbb{F}_{2})$, where $n \geq 2$, and vectors $u + (A^{-1})^{\top}v$ and $v+Au$, there always exist solutions for $u$ and $v$. So the main constraint to this method
arises from the action of the inverse-transpose map on $\mathrm{GL}_{n}(\mathbb{F}_{2})$. Luckily, this subject has been extensively studied by Gow \cite{GowInvT},
and more recently by Fulman and Guralnick \cite{FulmanGuralnick}. The following theorem is a special case of a result from the latter paper.

\begin{theorem}[Theorem 4.2, \cite{FulmanGuralnick}]\label{FulmanGuralnick}
An element of $\mathrm{GL}_{n}(\mathbb{F}_{2})$ can be written in the form $M(M^{-1})^{\top}$ if and only if
\begin{enumerate}
    \item For each eigenvalue $\lambda \neq 1$, the multiplicities of $\lambda$ and $\lambda^{-1}$ are equal.
    \item The number of Jordan blocks of each even size is even.
\end{enumerate}
\end{theorem}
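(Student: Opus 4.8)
The plan is to translate the problem into the language of bilinear forms, where it becomes a classical classification question, and then to run the analysis on the module‑theoretic decomposition of the form. First I would record the elementary equivalence: an invertible $M$ satisfies $g = M(M^{-1})^{\top}$ if and only if $gM^{\top} = M$ (multiply on the right by $M^{\top}$ and use $(M^{-1})^{\top}M^{\top} = I$). Reading $M$ as the Gram matrix of the bilinear form $B(x,y) = x^{\top}My$ on $\F_2^{\,n}$, the relation $gM^{\top}=M$ says exactly that $B(x,y) = B(y,g^{\top}x)$ for all $x,y$; that is, $g^{\top}$ is the \emph{asymmetry} of the nondegenerate form $B$. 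Since $g$ is always conjugate to $g^{\top}$ and the set of asymmetries is stable under change of basis, the task reduces to characterizing which $g \in \mathrm{GL}_{n}(\F_2)$ occur as the asymmetry of some nondegenerate bilinear form on $\F_2^{\,n}$. As a first sanity check, any such $g$ is conjugate to $g^{-1}$: indeed $M^{-1}gM = (M^{-1})^{\top}M$ has transpose $g^{-1}$, and any matrix over a field is conjugate to its transpose. Hence the elementary divisors of $g$ must be invariant under the map sending $p(x)$ to its reciprocal, which is the content of the first listed condition.

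Next I would use $B$ to decompose the space. View $V = \F_2^{\,n}$ as an $\F_2[x,x^{-1}]$‑module with $x$ acting as $g$. Generalized eigenspaces for eigenvalues $\lambda$ and $\mu$ are $B$‑orthogonal unless $\mu = \lambda^{-1}$, so $(V,B)$ splits as an orthogonal direct sum of $g$‑invariant pieces: one for each pair $\{p,p^{*}\}$ of mutually reciprocal irreducible factors of $\chi(g)$ with $p \neq p^{*}$, and one $(x+1)$‑primary piece, $x+1$ being the only self‑reciprocal linear polynomial over $\F_2$. It then suffices to decide which pieces can occur and to reassemble them.

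The case analysis is the heart of the argument. For a reciprocal pair $p \neq p^{*}$: the $p$‑primary and $p^{*}$‑primary submodules are each totally isotropic and $B$ restricts to a perfect pairing between them, which exists precisely when they are isomorphic as $\F_2[x]$‑modules, i.e.\ the elementary divisors $p^{k}$ and $(p^{*})^{k}$ occur with equal multiplicity for every $k$; no parity constraint arises. The same holds for self‑reciprocal $p$ of degree at least $2$: a single block $\F_2[x]/(p^{k})$ always admits a nondegenerate form with the prescribed asymmetry, and since $\deg p$ is even the induced Jordan blocks already come in pairs, so the second condition is automatic there. The delicate piece is the $(x+1)$‑primary part, the unipotent part of $g$ (where $1 = -1$ in characteristic $2$): here I would show that a single Jordan block of \emph{even} size cannot carry a nondegenerate form whose asymmetry is that block — the obstruction being a discriminant/Arf‑type invariant of the restricted form — while a single odd block can, and a pair of equal even blocks can, via an explicit hyperbolic construction. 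This is exactly the second listed condition. Sufficiency then follows by assembling a block‑diagonal $M$: hyperbolic on each reciprocal pair and on each matched pair of even unipotent blocks, and a small explicit Gram matrix on each odd unipotent block, after which $gM^{\top}=M$ and invertibility are checked directly.

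The main obstacle is precisely this last case: establishing the even‑versus‑odd dichotomy for unipotent blocks in characteristic $2$, in both directions — constructing the invariant that obstructs a lone even block, and exhibiting the explicit forms realizing odd blocks and matched even pairs. A secondary but genuine subtlety, needed to make the case analysis legitimate, is verifying that the orthogonal decomposition of $(V,B)$ can be chosen with indecomposable summands of exactly the three types above, rather than some exotic ``mixed'' form‑module appearing in characteristic $2$; one handles this by the standard structure theory of modules with a nondegenerate pairing, but it must be done with care over $\F_2$.
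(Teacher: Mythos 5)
This statement is quoted in the paper as Theorem~4.2 of Fulman--Guralnick and is not proved there, so there is no internal argument to compare against; the relevant benchmark is the classical approach of Gow and Fulman--Guralnick, which is exactly the route you sketch: $g=M(M^{-1})^{\top}$ iff $gM^{\top}=M$, i.e.\ $g$ is (up to transpose) the asymmetry of the nondegenerate bilinear form with Gram matrix $M$, followed by a primary decomposition of $(\F_{2}^{\,n},B)$ and a case analysis. Your reduction and your necessity argument for condition (1) are correct, and in fact you derive the correct stronger form of it (equal multiplicities of the elementary divisors $p^{k}$ and $(p^{*})^{k}$ for every $k$), which is what the theorem really needs --- the eigenvalue-multiplicity phrasing in the quoted statement is a loose paraphrase.

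The genuine gap is that the entire content of condition (2), and the sufficiency half of the theorem, is announced rather than proved. You write that you ``would show'' that a single unipotent Jordan block of even size cannot be the asymmetry of a nondegenerate form over $\F_{2}$ (via an unspecified ``discriminant/Arf-type invariant''), that a single odd block and a matched pair of even blocks can (via unspecified explicit Gram matrices), and that the orthogonal decomposition into the three types of indecomposable pieces is legitimate in characteristic $2$; none of these is carried out, and you yourself identify them as the main obstacles. These are precisely the nontrivial parts of the theorem: without the obstruction for a lone even block the ``only if'' direction of (2) is missing, and without the explicit realizations and the decomposition lemma the ``if'' direction is missing entirely. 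In particular, the invariant you allude to must be constructed with care: over $\F_{2}$ the naive discriminant is useless, and the standard proofs instead analyze the radical filtration of the symmetric part of $B$ restricted to the $(x+1)$-primary component (or invoke Riehm/Gabriel-type classification of bilinear forms under congruence), which is a genuinely delicate piece of work, not a routine verification. As it stands, the proposal is a correct roadmap that reduces the theorem to its hardest cases and then stops, so it cannot be accepted as a proof.
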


To be entirely explicit, we take $n = 2^{t-1}-1$ from now on and set $L$ to be a Jordan block of maximal size in $\mathrm{GL}_{n}(\mathbb{F}_{2})$. We exhibit a decomposition of $L$ of the form $A(A^{-1})^{\top}$, as guaranteed by Theorem \ref{FulmanGuralnick}.

\begin{proposition}\label{Jordan Decomposition}
Let $L \in \mathrm{GL}_{n}(\mathbb{F}_{2})$ be a Jordan block of size $n$. Then $L = A(A^{-1})^{\top}$ where $A = A_{(t-1)}$ is the submatrix of $\otimes^{t-1} \left[\begin{array}{cc} 1 & 1 \\ 1 & 0 \end{array}\right]$, obtained by deleting the first row and last column.
\end{proposition}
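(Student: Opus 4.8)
The plan is to verify the matrix identity $L = A(A^{-1})^{\top}$ directly, after first obtaining closed forms for the entries of $A$ and of $A^{-1}$. Write $m = t-1$ and index the rows and columns of $C := \otimes^{m}\left[\begin{array}{cc}1&1\\1&0\end{array}\right]$ by the integers $0,1,\dots,2^{m}-1$, each identified with its binary expansion in $\F_{2}^{m}$. Since $\left[\begin{array}{cc}1&1\\1&0\end{array}\right]$ has $(a,b)$-entry equal to $1$ exactly when $a\wedge b = 0$ (bitwise AND), the Kronecker product gives the closed form $C_{ab} = 1$ iff $a\wedge b = 0$, i.e.\ $a$ and $b$ have disjoint binary support. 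The same reasoning applied to $\left[\begin{array}{cc}1&1\\1&0\end{array}\right]^{-1} = \left[\begin{array}{cc}0&1\\1&1\end{array}\right]$ shows that $C^{-1} = \otimes^{m}\left[\begin{array}{cc}0&1\\1&1\end{array}\right]$ satisfies $(C^{-1})_{ab} = 1$ iff $\bar a \wedge \bar b = 0$, where $\bar{\ }$ denotes bitwise complement in $\F_{2}^{m}$; equivalently $(C^{-1})_{ab} = C_{\bar a, \bar b}$. In this labelling, with $n = 2^{m}-1$, the matrix $A$ is precisely the submatrix of $C$ on rows $\{1,\dots,n\}$ and columns $\{0,\dots,n-1\}$ (the deleted row is labelled by the zero vector, the deleted column by the all-ones vector, which is $n$).

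Next I would pin down $A^{-1}$. The column of $C$ labelled $n$ has entries $C_{a,n} = [\,a = 0\,]$, and hence vanishes on every row labelled by $\{1,\dots,n\}$. Substituting this into the relation $CC^{-1} = I$ shows immediately that the submatrix of $C^{-1}$ on rows $\{0,\dots,n-1\}$ and columns $\{1,\dots,n\}$ is a right inverse of $A$; since $A$ is square over a field, this submatrix equals $A^{-1}$. Combining the two entry formulas, for $i,j \in \{1,\dots,n\}$ one obtains
\[ \bigl(A(A^{-1})^{\top}\bigr)_{ij} = \sum_{k=1}^{n} C_{i,k-1}\,(C^{-1})_{j-1,k} = \sum_{b=0}^{n-1}\bigl[\,i\wedge b = 0\,\bigr]\bigl[\,(n{-}j{+}1)\wedge(n{-}1{-}b) = 0\,\bigr], \]
after the substitutions $b = k-1$, $\bar k = n-1-b$, and $\overline{j-1} = n-j+1$.

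The heart of the proof is therefore the modular identity
\[ \sum_{b=0}^{n-1}\bigl[\,i\wedge b = 0\,\bigr]\bigl[\,i'\wedge(n-1-b) = 0\,\bigr] \;\equiv\; [\,i+i' = n\,] + [\,i+i' = n+1\,] \pmod{2} \]
for all $i,i' \in \{1,\dots,n\}$. I would prove this by recognising the left-hand side as the coefficient of $z^{2^{m}-2}$ in $F_{i}(z)\,F_{i'}(z)$ over $\F_{2}$, where $F_{i}(z) := \sum_{b=0}^{2^{m}-1}[\,i\wedge b = 0\,]\,z^{b} = \prod_{k\,:\,i_{k}=0}\bigl(1+z^{2^{k}}\bigr)$. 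Over $\F_{2}$ we have $1+z^{2^{k}} = (1+z)^{2^{k}}$, so $F_{i}(z) = (1+z)^{\bar i}$ with $\bar i$ read as an integer; hence $F_{i}F_{i'} = (1+z)^{\bar i+\bar{i'}}$ and the coefficient in question equals $\binom{\bar i+\bar{i'}}{2^{m}-2}\bmod 2$. By Lucas' theorem this is $1$ exactly when the binary digits of $\bar i+\bar{i'}$ in positions $1,\dots,m-1$ are all $1$; since $0 \le \bar i,\bar{i'} \le 2^{m}-2$, a short range check leaves only the possibilities $\bar i+\bar{i'} \in \{2^{m}-2,\ 2^{m}-1\}$, i.e.\ $i+i' \in \{n,\ n+1\}$, which is the right-hand side. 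Feeding $i' = n-j+1$ back into the previous display then gives $\bigl(A(A^{-1})^{\top}\bigr)_{ij} = [\,i = j-1\,] + [\,i = j\,]$; that is, $A(A^{-1})^{\top}$ carries ones exactly on the main diagonal and the superdiagonal, so $A(A^{-1})^{\top} = L$ and the proposition follows.

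The only real difficulty I anticipate is bookkeeping: keeping straight the identification of the submatrix indices $\{1,\dots,n\}$ with the binary vectors they label through the transpositions, the passage from $C$ to $C^{-1}$, and the index shifts by $1$, and confirming that under the lexicographic order the deleted row and column are indeed the zero vector and the all-ones vector. The single genuinely substantive ingredient, the $\bmod 2$ identity above, comes out cleanly once the generating-function reformulation and the factorisation $F_{i}(z) = (1+z)^{\bar i}$ are in place.
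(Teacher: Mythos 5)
Your proof is correct, but it takes a genuinely different route from the paper. The paper reformulates the claim as the equivalent identity $A = LA^{\top}$ (multiplying $L = A(A^{-1})^{\top}$ on the right by $A^{\top}$) and proves it by induction on $t$, exploiting the block-recursive structure of the Kronecker power $A_{(k+1)}$ in terms of $A_{(k)}$; the only computations are boundary checks on the glued blocks, which it leaves as ``readily verifiable.'' You instead work non-recursively: you use the closed form $C_{ab}=[\,a\wedge b=0\,]$ for the full Kronecker power, observe that the deleted column of $C$ is supported only on the deleted row so that the complementary submatrix of $C^{-1}$ is literally $A^{-1}$ (a nice structural point the paper never needs, since it avoids computing $A^{-1}$ altogether), and then evaluate $A(A^{-1})^{\top}$ entrywise, reducing each entry to the parity of $\binom{\bar i+\bar i'}{2^{m}-2}$ via the factorisation $F_{i}(z)=(1+z)^{\bar i}$ over $\F_{2}$ and Lucas' theorem; the range argument correctly pins down $i+i'\in\{n,n+1\}$, giving ones exactly on the diagonal and superdiagonal, i.e.\ the unipotent Jordan block, consistent with the paper's convention $L=I+C$. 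What each approach buys: the paper's induction is shorter and uses no binomial machinery, but hides the final verification in the reader's lap and gives no formula for $A^{-1}$; your argument costs more index bookkeeping (the shifts by one and the complementation $\bar k = n-1-b$, $\overline{j-1}=n-j+1$ must be tracked carefully, and you do so correctly) but is fully explicit, yields closed forms for both $A$ and $A^{-1}$, and verifies every entry of $A(A^{-1})^{\top}$ directly rather than by recursion.
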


\begin{proof}
It suffices to prove that $A_{(t-1)} = L(A_{(t-1)})^{\top}$.  We prove the equivalent statement that \[\left(A_{(t-1)}\right)_{i,j} = \left(A_{(t-1)}^{\top}\right)_{i,j} + \left(A_{(t-1)}^{\top}\right)_{i,j+1}\] for all $t \geq 2$, $1 \leq i \leq n-1$ and $1 \leq j \leq n$, and that the last row of $A_{(t-1)}$ and $(A_{(t-1)})^{\top}$ are identical.   Since the latter claim is a simple observation, we focus on the former.  It is simple to verify for $t = 2$. We assume then that the claim is true when $t-1=k$, and prove the general claim by induction.  Observe that
\[
A_{(k+1)} = \left[\begin{array}{c|c|c} A_{(k)} & 0_{m}^{\top} & A_{(k)} \\ \hline 1_{m} & 1 & 0_{m} \\ \hline A_{(k)} & 0_{m}^{\top} & 0_{m\times m} \end{array}\right] ~ \text{and}~ A_{(k+1)}^{\top} = \left[\begin{array}{c|c|c} A_{(k)}^{\top} & 1_{m}^{\top} & A_{(k)}^{\top} \\ \hline 0_{m} & 1 & 0_{m} \\ \hline A_{(k)}^{\top} & 0_{m}^{\top} & 0_{m\times m} \end{array}\right],
\]
\noindent where $m = 2^{k-1}-1$.  The inductive hypothesis implies we need only check that the claim holds when $j = 2^{k-1}$ or when $i \in \{2^{k-1}-1,2^{k-1}\}$, which is readily verifiable.
\end{proof}

Next, we determine the cycle type of a conjugacy class of elements of $\mathrm{AGL}_{n}(\mathbb{F}_{2})$, of the form $(0, v, L)$ where $L$ is as in Proposition \ref{Jordan Decomposition} and $v$ is in the largest generalized eigenspace of $L$; in particular we may take $v = (0,0,0, \ldots, 0, 1)^{\top}$. Our techniques are similar to those of Guest, Morris, Praeger, and Spiga \cite{PraegerAffine}. We begin by establishing the multiplicative order of $L$.

\begin{proposition}\label{JordanOrder}
Let $L$ be a Jordan block of size $n = 2^{t-1} - 1$. Then $L$ has multiplicative order $2^{t-1} = n+1$. Furthermore, $\sum_{i=0}^{n}L^{i} = 0$.
\end{proposition}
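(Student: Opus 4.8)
The plan is to work over $\mathbb{F}_2$ and exploit the fact that a single Jordan block of size $n$ with eigenvalue $1$ is $I + N$, where $N$ is the nilpotent shift with $N^n = 0$ but $N^{n-1} \neq 0$. Since we are in characteristic $2$, the binomial theorem gives $L^{2^k} = (I+N)^{2^k} = I + N^{2^k}$ for every $k \geq 0$. Therefore $L^{2^k} = I$ exactly when $N^{2^k} = 0$, i.e. when $2^k \geq n = 2^{t-1}-1$; the smallest such $k$ is $k = t-1$, giving $L^{2^{t-1}} = I$. To see that the order is exactly $2^{t-1}$ (and not a proper divisor), note that the order must be a power of $2$ — it divides $2^{t-1}$ — so it suffices to rule out $L^{2^{t-2}} = I$; but $L^{2^{t-2}} = I + N^{2^{t-2}}$ and $2^{t-2} = (n+1)/2 \leq n-1$ for $t \geq 2$ (checking the small case $t=2$, where $n=1$, separately and trivially), so $N^{2^{t-2}} \neq 0$ and $L^{2^{t-2}} \neq I$. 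Hence $L$ has multiplicative order exactly $2^{t-1} = n+1$.

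For the second claim, I would argue that $\sum_{i=0}^{n} L^i = 0$ in $\Mat_n(\mathbb{F}_2)$. The cleanest route is the identity $(L - I)\sum_{i=0}^{n} L^i = L^{n+1} - I = 0$, using $L^{n+1} = L^{2^{t-1}} = I$ from the first part. Since $L - I = N$ is nilpotent, this alone does not immediately force the sum to vanish, so I need a supplementary observation: $\sum_{i=0}^{n} L^i = \sum_{i=0}^{n}(I+N)^i$, and expanding by the binomial theorem and collecting the coefficient of $N^j$ gives $\sum_{i=0}^{n}\binom{i}{j} = \binom{n+1}{j+1}$ by the hockey-stick identity. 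So $\sum_{i=0}^{n} L^i = \sum_{j=0}^{n-1}\binom{n+1}{j+1} N^j$ (terms with $j \geq n$ vanish as $N^n = 0$). Now $n+1 = 2^{t-1}$ is a power of $2$, so by Kummer's theorem (or Lucas's theorem) $\binom{2^{t-1}}{j+1}$ is even for every $1 \leq j+1 \leq 2^{t-1}-1$, i.e. for every $0 \leq j \leq n-1$. Every coefficient is therefore $0$ in $\mathbb{F}_2$, and the sum is the zero matrix.

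The main obstacle is purely bookkeeping: making sure the range of $j$ in the binomial expansion lines up with the vanishing of powers of $N$, and confirming that the relevant binomial coefficients $\binom{2^{t-1}}{j+1}$ for $1 \leq j+1 \leq 2^{t-1}-1$ are all even — this is exactly the classical statement that $2^{t-1} \mid \binom{2^{t-1}}{m}$ is \emph{not} what we need; rather we need evenness, which follows since the base-$2$ representation of $2^{t-1}$ is a single $1$, so any $m$ strictly between $0$ and $2^{t-1}$ produces at least one carry in the addition $m + (2^{t-1}-m)$, hence $2 \mid \binom{2^{t-1}}{m}$ by Kummer. A sanity check in the smallest nontrivial case $t=3$, $n=3$, $L = I+N$ with $N^3 = 0$: $L$ has order $4$ since $L^2 = I + N^2 \neq I$ and $L^4 = I + N^4 = I$; and $I + L + L^2 + L^3 = (I) + (I+N) + (I+N^2) + (I+N+N^3) = 4I + 2N + N^2 + N^3 = N^2$ — wait, this needs $N^3=0$, giving $N^2 \neq 0$, so one should double-check the index conventions here, but over the correct ranges the hockey-stick/Kummer computation is what settles it cleanly, and I would present that as the proof rather than the term-by-term expansion.
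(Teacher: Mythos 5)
Your proof of the first claim is essentially the paper's: write $L = I + N$ and use the characteristic-$2$ identity $L^{2^k} = I + N^{2^k}$; your extra care in ruling out order $2^{t-2}$ (and the $t=2$ case) is fine, and matches the paper's observation that $N^{2^x}$ first vanishes at $x = t-1$. For the second claim you take a genuinely different route. The paper sets $X = \sum_{i=0}^{n} L^i$ and exploits $LX = X = XL$: comparing consecutive rows (resp.\ columns) forces rows $2,\dots,n$ and columns $1,\dots,n-1$ of $X$ to vanish, and the one remaining entry $X_{1,n}$ is computed directly to be $1+1=0$, since only $L^{n-1}$ and $L^{n}$ contribute a $1$ in position $(1,n)$. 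You instead compute the sum explicitly: $\sum_{i=0}^{n}(I+N)^i = \sum_{j=0}^{n-1}\binom{n+1}{j+1}N^{j}$ by the hockey-stick identity (the $j \ge n$ terms die because $N^{n}=0$), and since $n+1 = 2^{t-1}$, Kummer/Lucas makes every coefficient $\binom{2^{t-1}}{j+1}$ with $1 \le j+1 \le 2^{t-1}-1$ even, so the sum is zero over $\mathbb{F}_2$. Both arguments are correct; the paper's is shorter and needs no binomial parity facts, while yours is more transparent about exactly where the hypothesis that $n+1$ is a power of $2$ enters, gives the sum as an explicit polynomial in $N$, and shows the conclusion fails for general $n$. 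One small slip worth fixing: in your $t=3$ sanity check you expanded $L^3$ as $I+N+N^3$, but over $\mathbb{F}_2$ one has $L^3 = I + N + N^2 + N^3$ (all of $\binom{3}{j}$ are odd); with this correction the sum is $4I + 2N + 2N^2 \equiv 0$, so the apparent discrepancy was an arithmetic artifact of the check, not a gap in the argument you propose to present.
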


\begin{proof}
Recall that an upper unitriangular matrix belongs to the Sylow $2$-subgroup of $\mathrm{GL}_{n}(\mathbb{F}_{2})$. Write $L = I + C$ where $I$ is the identity matrix, and $C$ has $1$s on the superdiagonal but is zero elsewhere.  Then for any positive integer $x$, we have $L^{2^x} = \sum_{i=0}^{2^x}\binom{2^x}{i}(I^{2^{x}-i} C^i) = I^{2^{x}} + C^{2^x}$.  Since $C^{m}$ has a diagonal of $1$s beginning in row $m+1$, the smallest value of $x$ for which $C$ vanishes is $x=t-1$.

Setting $X = \sum_{i=0}^{n}L^i$, we clearly have that $LX = XL = X$. Row $j$ of $LX$ is the sum of row $j$ and row $j+1$ of $X$, and thus row $j+1$ of $X$ is zero for all $1 \leq j \leq n-1$.  Likewise, column $j+1$ of $XL$ is the sum of column $j$ and column $j+1$ of $X$, and so column $j$ is zero for all $1 \leq j \leq n-1$.  Finally, observe that $L^{i}_{1,n} = 0$ for all $0 \leq i \leq 2^{t-1}-3$, and that $L^{i}_{1,n} = 1$ for $i \in \{2^{t-1}-2,2^{t-1}-1\}$, and thus $X_{1,n} = 0$.
\end{proof}

We recall that the subgroup $\{(0, v, L) \mid v \in V, L \in \mathrm{GL}_{n}(\mathbb{F}_{2}) \}$ is isomorphic to the group $\mathrm{AGL}_{n}(\mathbb{F}_{2})$ and has a natural permutation action on $V$ given by $(0,v,L) \cdot a = La + v$, which extends naturally to an action on the rows and columns of $\mathcal{S}_{n}$ as described in Section \ref{sect:autsyl}.

\begin{proposition}\label{Jordanprop}
Let $v = (0,\ldots,0,1)^{\top}$ and $u = (1,0,\ldots, 0)^{\top}$. Then every orbit of $\langle (0,v,L)\rangle$ on the natural module $V$ has length $2^{t-1} = n+1$.
If $\gamma$ is the sum of elements in an orbit, then $\langle \gamma, u\rangle = 1$.
\end{proposition}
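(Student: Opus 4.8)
The plan is to work directly with the affine action $(0,v,L)\cdot a = La + v$ and iterate it. Writing $g = (0,v,L)$ and $S_k = \sum_{i=0}^{k-1} L^i$, an easy induction gives $g^k\cdot a = L^k a + S_k v$ for all $k\ge 0$. Proposition \ref{JordanOrder} supplies $L^{n+1} = I$ and $S_{n+1} = \sum_{i=0}^{n} L^i = 0$, so $g^{n+1}$ is the identity on $V$; hence every orbit length divides $n+1 = 2^{t-1}$ and is a power of $2$. To force every orbit to have length exactly $2^{t-1}$ it therefore suffices to prove that $g^{2^{t-2}}$ has no fixed point, since an orbit length dividing $2^{t-1}$ but not $2^{t-2}$ must equal $2^{t-1}$.

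For that fixed-point-free claim I would write $L = I + C$, where $C$ is the nilpotent superdiagonal shift ($Ce_j = e_{j-1}$, $Ce_1 = 0$), so that $L^{2^{t-2}} = I + C^{2^{t-2}}$ in characteristic $2$. A fixed point of $g^{2^{t-2}}$ would satisfy $C^{2^{t-2}} a = S_{2^{t-2}} v$. The right-hand side I would pin down via the telescoping identity $C\,S_{2^{t-2}} = (L+I)S_{2^{t-2}} = L^{2^{t-2}}+I = C^{2^{t-2}}$, which shows that every column of $S_{2^{t-2}} + C^{2^{t-2}-1}$ lies in $\ker C = \langle e_1\rangle$; applying this to the last column and using $v = e_n$ gives $S_{2^{t-2}} v = C^{2^{t-2}-1} e_n + \delta e_1 = e_{2^{t-2}} + \delta e_1$ for some $\delta \in \F_2$. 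On the other hand $(C^{2^{t-2}} a)_i = a_{i + 2^{t-2}}$ is supported in coordinates $1,\dots,2^{t-2}-1$, so its coordinate $2^{t-2}$ is $0$, whereas coordinate $2^{t-2}$ of $e_{2^{t-2}} + \delta e_1$ is $1$ (for $t \ge 3$ the $e_1$-term cannot interfere). So no fixed point exists; the case $t = 2$ is immediate because there $g\cdot a = a + v \ne a$.

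For the second assertion I would sum over an orbit of a point $a_0$: using $\sum_{k=0}^{n} L^k = 0$ again,
\[ \gamma = \sum_{k=0}^{n} g^k\cdot a_0 = \Bigl(\sum_{k=0}^n L^k\Bigr) a_0 + \Bigl(\sum_{k=0}^n S_k\Bigr) v = \Bigl(\sum_{k=0}^n S_k\Bigr) v, \]
independently of $a_0$. Counting the number of $k$ for which each $L^i$ occurs gives $\sum_{k=0}^n S_k = \sum_{i=0}^{n-1}(n-i)\,L^i$, and since $n = 2^{t-1}-1$ is odd this reduces mod $2$ to $\sum_{i \text{ even}} L^i = \sum_{j=0}^{2^{t-2}-1} L^{2j}$. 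Then $\langle \gamma, u\rangle = \gamma^{\top} e_1$ is the $(1,n)$-entry of $\sum_{j=0}^{2^{t-2}-1} L^{2j}$, namely $\sum_{j=0}^{2^{t-2}-1}\binom{2j}{n-1} \bmod 2$, because only the $C^{n-1}$-term of $(I+C)^{2j}$ reaches position $(1,n)$. Every term with $2j < n-1$ vanishes, leaving only $j = 2^{t-2}-1$, which contributes $\binom{2^{t-1}-2}{2^{t-1}-2} = 1$; hence $\langle \gamma, u\rangle = 1$.

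The main obstacle is the middle step: to separate $S_{2^{t-2}} v$ from the image of $C^{2^{t-2}}$ one needs a usable description of $S_{2^{t-2}} v$, and the telescoping identity $C S_m = C^m$ together with $\ker C = \langle e_1\rangle$ provides exactly enough — a single coordinate ($2^{t-2}$) already separates the two sides, so no delicate computation with the full matrix $S_{2^{t-2}}$ is required. The remaining binomial-coefficient bookkeeping is routine once one observes that the index $n-1 = 2^{t-1}-2$ forces all but one term to be zero.
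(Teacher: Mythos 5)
Your argument is correct, but it reaches both conclusions by a different route than the paper. The paper's proof works from one explicit orbit: it first pins down the order of $(0,v,L)$ as exactly $2^{t-1}$ (via Proposition \ref{JordanOrder} and the exponent of a Sylow $2$-subgroup of $\mathrm{GL}_{2^{t-1}}(\F_2)$), then computes the orbit of the zero vector explicitly, showing $y_j=\sum_{i=0}^{j-1}L^iv$ has its leading $1$ in position $2^{t-1}-j+1$; this gives the full-length orbit and, since only $y_n$ has a $1$ in the first coordinate, $\langle\gamma_0,u\rangle=1$ in one stroke. It then transfers to an arbitrary starting point by linearity, using $\sum_{i=0}^{n}L^i=0$ to see that the sum $\sum_{k=0}^{n}(0,v,L)^k\cdot a$ is always $\gamma_0$ (the fact that this nonvanishing forces every orbit to have full length is left implicit there). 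You instead never compute a concrete orbit: you prove that $g^{2^{t-2}}$ is fixed-point free via the telescoping identity $CS_m=L^m+I=C^m$ and $\ker C=\langle e_1\rangle$, comparing a single coordinate ($2^{t-2}$) of $S_{2^{t-2}}v$ against the image of $C^{2^{t-2}}$, and you evaluate $\langle\gamma,u\rangle$ as the $(1,n)$ entry of $\sum_{j=0}^{2^{t-2}-1}L^{2j}$ by a binomial-coefficient count mod $2$. Your version makes fully explicit the step "every orbit, not just that of $0$, has length $2^{t-1}$," at the price of more bookkeeping (the parity count giving $\sum_k S_k\equiv\sum_j L^{2j}$ and the $\binom{2j}{n-1}$ evaluation, plus the separate $t=2$ case); the paper's version is shorter because the explicit $y_j$ deliver both claims simultaneously and the linearity trick does the rest. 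Both proofs rest on the same inputs, namely $L^{2^{t-1}}=I$ and $\sum_{i=0}^{n}L^i=0$ from Proposition \ref{JordanOrder}.
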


\begin{proof}
The group $\mathrm{AGL}_{n}(\mathbb{F}_{2})$ is isomorphic to a subgroup of $\mathrm{GL}_{n+1}(\mathbb{F}_{2})$ and the exponent of the Sylow $2$-subgroup of $\mathrm{GL}_{2^{t-1}}(\mathbb{F}_{2})$ is $2^{t-1}$ (see, e.g., \cite[p.~192]{Suprunenko}), thus we have by Proposition \ref{JordanOrder} that the
order of $(0, v, L) \in \mathrm{AGL}_{n}(\mathbb{F}_{2})$ is precisely $2^{t-1}$.

Let $y_{0} = 0$ and consider the orbit of $y_{0}$ under $\langle (0,v,L)\rangle$: so $y_{j} = \sum_{i=0}^{j-1}L^{i}v$ for $1 \leq j \leq n$. For each $j \geq 1$, the vector $y_{j}$ has $1$ in entry $2^{t-1} - j + 1$, and all prior entries are $0$; so the $y_{j}$ are distinct.  Hence the orbit has length $n+1$.  Define $\gamma_{0} = \sum_{j=0}^{n} y_{j}$.  Only $y_{n}$ has $1$ in its first entry, so $\langle \gamma_{0}, u\rangle = 1$, as required.

By Proposition \ref{JordanOrder} the sum $\sum_{i=0}^{n}L^{i} = 0$ where $n = 2^{t-1}-1$.  For any $u \in V$, the sum of the elements in the orbit of $u$ under $\langle (0,v,L)\rangle$ is by linearity equal to
\[\left(\sum_{i=0}^{n}L^{i}u\right) + \gamma_{0} = \gamma_{0}\,, \]
where $\gamma_{0}$ is the sum of the elements of the orbit of the zero vector, and so the result follows.
\end{proof}

Recall the notation of Definition \ref{components} and the Sylvester matrix $\mathcal{S}_{n}$ \eqref{Sylvester}, where $n = 2^{t-1}-1$.

\begin{proposition}\label{method}
Let $a = (0, 1, 1, \ldots, 1)^{\top}$ and $b = (1, 1, 1, \ldots, 1)^{\top}$ be elements of an $n$-dimensional vector space $V$ over $\mathbb{F}_{2}$.  Let $A$ be the matrix of Proposition \ref{Jordan Decomposition}.  Define $H = D_{a}T_{b}\rho(A)\mathcal{S}_{n}$.  Then $\mathfrak{m}(2^{-n/2}H) = \Phi_{2^{t+1}}(x)$.
\end{proposition}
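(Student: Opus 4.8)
The plan is to write $H = P\mathcal{S}_{n}$ with $P = D_{a}T_{b}\rho(A)$ and to pair $P$ with $Q = D_{b}T_{a}\rho((A^{-1})^{\top})$, so that $(P,Q) = \Psi(a,b,A)$ lies in $\Psi(\mathcal{G}_{n})$. Since $n - 1 = 2^{t-1} - 2$ is even we have $\langle a,b\rangle = 0$, so Proposition \ref{newaut} gives $P\mathcal{S}_{n}Q^{\top} = \mathcal{S}_{n}$; as $\mathcal{S}_{n}$ is a symmetric Hadamard matrix of order $2^{n}$, Proposition \ref{trick} applies with $\ell = 0$, and by its part~(3) the conclusion $\mathfrak{m}(2^{-n/2}H) = \Phi_{2^{t+1}}(x)$ reduces to showing that $\mathfrak{m}(PQ) = \Phi_{2^{t}}(x)$. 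Combining the product formula \eqref{PQ product} with $A(A^{-1})^{\top} = L$ from Proposition \ref{Jordan Decomposition} gives $PQ = D_{a + (A^{-1})^{\top}b}\,T_{b + Aa}\,\rho(L)$, where $L$ is the Jordan block of size $n$.

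The first step is to pin down the two parameter vectors of $PQ$: I would show $b + Aa = v := (0,\ldots,0,1)^{\top}$ and $a + (A^{-1})^{\top}b = u := (1,0,\ldots,0)^{\top}$, the vectors appearing in Proposition \ref{Jordanprop}. Since $a = b + u$ over $\mathbb{F}_{2}$, this reduces to three identities for $A = A_{(t-1)}$: the first column $Au$ of $A$ equals the all-ones vector $b$; the row-sum vector $Ab$ equals $v$; and the column-sum vector $A^{\top}b$ equals $b$ (equivalently $(A^{-1})^{\top}b = b$). Granting these, $b + Aa = b + Ab + Au = b + v + b = v$ and $a + (A^{-1})^{\top}b = a + b = u$. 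Each identity is read off from the Kronecker structure of $A$, using that the row- and column-sum vectors of a Kronecker product are the Kronecker products of those of the factors: over $\mathbb{F}_{2}$ the matrix $\left[\begin{smallmatrix}1&1\\1&0\end{smallmatrix}\right]$ has first column $(1,1)^{\top}$, first row $(1,1)$, row-sum vector $(0,1)^{\top}$ and column-sum vector $(0,1)$, which determines the corresponding vectors of $\otimes^{t-1}\left[\begin{smallmatrix}1&1\\1&0\end{smallmatrix}\right]$; one then accounts for the deletion of the first row and last column, exactly as in the proof of Proposition \ref{Jordan Decomposition} (or by the same induction on $t$). This identifies $PQ = D_{u}\,T_{v}\,\rho(L)$.

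Next I would compute $\chi(PQ)$ via Proposition \ref{monomialprop}. The permutation underlying $D_{u}T_{v}\rho(L)$ is $x \mapsto Lx + v$, and by Proposition \ref{Jordanprop} every one of its orbits has length $2^{t-1}$. From Definition \ref{components}, the product of the nonzero entries of $PQ$ along an orbit $O$ is $\prod_{x \in O}(-1)^{\langle Lx + v,\,u\rangle} = (-1)^{\langle \gamma_{O},\,u\rangle}$, where $\gamma_{O} = \sum_{x \in O}x$; and Proposition \ref{Jordanprop} gives $\langle \gamma_{O},u\rangle = 1$. Hence each cycle of $PQ$ contributes a factor $x^{2^{t-1}} + 1 = \Phi_{2^{t}}(x)$, so $\chi(PQ) = \Phi_{2^{t}}(x)^{2^{\,n-t+1}}$, the exponent being the number of orbits. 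Finally, $PQ$ is a $\{\pm 1\}$-monomial matrix and hence has finite multiplicative order, so it is diagonalizable and $\mathfrak{m}(PQ)$ is squarefree; since $\Phi_{2^{t}}$ is irreducible over $\mathbb{Q}$ this forces $\mathfrak{m}(PQ) = \Phi_{2^{t}}(x)$, which by Proposition \ref{trick}(3) completes the proof.

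The main obstacle is the middle step, the verification that $b + Aa = v$ and $a + (A^{-1})^{\top}b = u$ for all $t$: this is the only place where the explicit form of $A$ enters, and everything else is a direct assembly of Propositions \ref{newaut}, \ref{trick}, \ref{monomialprop}, \ref{Jordan Decomposition} and \ref{Jordanprop}. That verification is routine via the Kronecker computation above (or the induction of Proposition \ref{Jordan Decomposition}), but it carries the real content of the argument.
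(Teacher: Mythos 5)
Your proposal is correct and follows essentially the same route as the paper: reduce via Proposition \ref{trick}(3) to showing $\mathfrak{m}(PQ) = \Phi_{2^{t}}(x)$, identify $PQ = D_{u}T_{v}\rho(L)$ with $u = (1,0,\ldots,0)^{\top}$, $v = (0,\ldots,0,1)^{\top}$, and combine Propositions \ref{monomialprop} and \ref{Jordanprop} to get a factor $x^{2^{t-1}}+1$ for every cycle. You are in fact more explicit than the paper at the one computational point it leaves unstated (the verification that $b+Aa = v$, $a+(A^{-1})^{\top}b = u$, and that the sign is $\ell = 0$), and your three identities for $A_{(t-1)}$ (first column all ones, row sums equal to $v$, column sums all ones) are correct and do follow from the Kronecker structure as you sketch.
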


\begin{proof}
Setting $P = D_{a}T_{b}\rho(A)$ and $Q = T_{a}D_{b}\rho((A^{-1})^{\top})$ we have that
\[ PQ = D_{u}T_{v}\rho(L)\,,\]
where $L$ is a Jordan block of size $n$ in $\mathrm{GL}_{n}(\mathbb{F}_{2})$, $v = (0,\ldots,0,1)^{\top}$, and $u = (1,0, \ldots, 0)^{\top}$.

Every orbit of $(0,v,L) \in \textrm{AGL}_{n}(\mathbb{F}_{2})$ on $V$ is of length $2^{t-1}$ by Proposition \ref{Jordanprop}. By Proposition \ref{monomialprop}, the characteristic polynomial of $PQ$ contains one factor $x^{2^{t-1}} - 1$ for each cycle in which the product of the nonzero elements is $1$, and $x^{2^{t-1}}+1$ for each cycle in which the product of elements is $-1$.

Observe that an entry in the row labeled by $\alpha$ of $PQ$ is $+1$ if and only if $\langle \alpha, u\rangle = 0$. So the product of nonzero entries in a cycle is negative if and only if the row labels of the entries in the cycle intersect the null space of the linear functional $\langle -, u\rangle$ in an odd number of points. By the second claim of Proposition \ref{Jordanprop}, every cycle of $PQ$ has this property. It follows that the minimal polynomial of $\mathfrak{m}(PQ)$ is precisely $\Phi_{2^{t}}(x)$.

Finally, we apply Proposition \ref{trick} to obtain that $\mathfrak{m}(P\mathcal{S}_{n}) = \Phi_{2^{t+1}}(x)$. This completes the proof.
\end{proof}

Theorem \ref{thm:hadexist} follows immediately from Proposition \ref{method}.

\begin{example}\label{ex:2}
Let $t=3$, so adhering to the procedure above, $a = (0,1,1)^{\top}$, $b = (1,1,1)^{\top}$, $u = (1,0,0)^{\top}$, and $v = (0,0,1)^{\top}$.  The matrix $\mathcal{S}_{3}$ is constructed under a lexicographic labeling of rows and columns; writing $-$ for $-1$,
\[
PQ =
\left[\begin{array}{cccccccc}
0 & 0 & 0 & 0 & 0 & 0 & 0 & 1\\
1 & 0 & 0 & 0 & 0 & 0 & 0 & 0\\
0 & 1 & 0 & 0 & 0 & 0 & 0 & 0\\
0 & 0 & 0 & 0 & 0 & 0 & 1 & 0\\
0 & 0 & 0 & - & 0 & 0 & 0 & 0\\
0 & 0 & 0 & 0 & - & 0 & 0 & 0\\
0 & 0 & 0 & 0 & 0 & - & 0 & 0\\
0 & 0 & - & 0 & 0 & 0 & 0 & 0\end{array}\right];
\]
and the matrices $D_{a}$, $T_{b}$, and $\rho(A)$ are, in order,
\[
\left[\begin{array}{cccccccc}
1 & 0 & 0 & 0 & 0 & 0 & 0 & 0\\
0 & - & 0 & 0 & 0 & 0 & 0 & 0\\
0 & 0 & - & 0 & 0 & 0 & 0 & 0\\
0 & 0 & 0 & 1 & 0 & 0 & 0 & 0\\
0 & 0 & 0 & 0 & 1 & 0 & 0 & 0\\
0 & 0 & 0 & 0 & 0 & - & 0 & 0\\
0 & 0 & 0 & 0 & 0 & 0 & - & 0\\
0 & 0 & 0 & 0 & 0 & 0 & 0 & 1\end{array}\right],
\left[\begin{array}{cccccccc}
0 & 0 & 0 & 0 & 0 & 0 & 0 & 1\\
0 & 0 & 0 & 0 & 0 & 0 & 1 & 0\\
0 & 0 & 0 & 0 & 0 & 1 & 0 & 0\\
0 & 0 & 0 & 0 & 1 & 0 & 0 & 0\\
0 & 0 & 0 & 1 & 0 & 0 & 0 & 0\\
0 & 0 & 1 & 0 & 0 & 0 & 0 & 0\\
0 & 1 & 0 & 0 & 0 & 0 & 0 & 0\\
1 & 0 & 0 & 0 & 0 & 0 & 0 & 0\end{array}\right],\]
and
\[
\left[\begin{array}{cccccccc}
1 & 0 & 0 & 0 & 0 & 0 & 0 & 0\\
0 & 0 & 0 & 0 & 0 & 0 & 0 & 1\\
0 & 0 & 1 & 0 & 0 & 0 & 0 & 0\\
0 & 0 & 0 & 0 & 0 & 1 & 0 & 0\\
0 & 1 & 0 & 0 & 0 & 0 & 0 & 0\\
0 & 0 & 0 & 0 & 0 & 0 & 1 & 0\\
0 & 0 & 0 & 1 & 0 & 0 & 0 & 0\\
0 & 0 & 0 & 0 & 1 & 0 & 0 & 0\end{array}\right].
\]
Then
\[
P\mathcal{S}_{3} =
\left[\begin{array}{cccccccc}
1 & 1 & 1 & 1 & - & - & - & -\\
- & 1 & 1 & - & - & 1 & 1 & -\\
- & - & 1 & 1 & 1 & 1 & - & -\\
1 & - & 1 & - & 1 & - & 1 & -\\
1 & - & 1 & - & - & 1 & - & 1\\
- & - & 1 & 1 & - & - & 1 & 1\\
- & 1 & 1 & - & 1 & - & - & 1\\
1 & 1 & 1 & 1 & 1 & 1 & 1 & 1\end{array}\right]
\]
is the constructed Hadamard matrix such that $8^{-1/2}P\mathcal{S}_{3}$ has characteristic polynomial $\Phi_{16}(x) = x^8 + 1$.  Hence if $H \in \mathrm{BH}(n,8)$, then with reference to Proposition \ref{trick} we have that $(\zeta_{16}H,P\mathcal{S}_{3})$ is a sound pair, and by Theorem \ref{thm:main} we have constructed a complete morphism $\mathrm{BH}(n,8) \rightarrow \mathrm{BH}(8n,2)$.
\end{example}

\subsection*{Acknowledgements}

The first author has been fully supported by the Croatian Science Foundation under the project 1637 and by the Irish Research Council (Government of Ireland Postdoctoral Fellowship, GOIPD/2018/304).  The authors also thank Ryan Vinroot for pointing out reference \cite{FulmanGuralnick}.  The authors acknowledge the helpful comments of the anonymous referees, which improved the exposition of the paper.

\end{document}